\newcommand{\subjclass}[2][1991]{%
  \let\@oldtitle\@title%
  \gdef\@title{\@oldtitle\footnotetext{#1 \emph{Mathematics subject classification.} #2}}%
}
\newcommand{\keywords}[1]{%
  \let\@@oldtitle\@title%
  \gdef\@title{\@@oldtitle\footnotetext{\emph{Key words and phrases.} #1.}}%
}
\theoremstyle{plain}
\newtheorem{df}{\sc \bf Definition}[section]
\newtheorem{thm}[df]{\sc \bf Theorem}
\newtheorem{prop}[df]{\sc \bf Proposition}
\theoremstyle{remark}
\newtheorem{rem}[df]{\sc Remark}
\newcommand{\K}{K}
\newcommand{\s}{\mathfrak{s}}
\newcommand{\R}{\boldsymbol{R}}
\newcommand{\bH}{\boldsymbol{H}}
\newcommand{\tr}{\mathrm{tr}}
\def\l{{\mathfrak{l}}}
\newcommand{\n}{\mathfrak{n}}
\newcommand{\q}{\mathfrak{q}}
\newcommand{\p}{\mathfrak{p}}
\newcommand{\m}{\mathfrak{m}}
\newcommand{\g}{\mathfrak{g}}
\newcommand{\kk}{\mathfrak{k}}
\newcommand{\da}{\mathfrak{a}}
\newcommand{\sll}{\mathfrak{sl}}
\newcommand{\gl}{\mathfrak{gl}}
\begin{document}
\title{Projectively flat and affinely flat parabolic subgroups of special linear groups}

\subjclass[2010]{Primary 53B10, 11S90; Secondary 53C10} 
\keywords{projective structure; Grassmannian structure; prehomogeneous vector space}

\author{Hironao Kato 
\thanks{During the research the author was supported by JSPS and JSPS Strategic Young Researcher Overseas Visits Program for  Accelerating Brain Circulation.}}

\maketitle
\begin{abstract}
A special linear Lie group over the real number field and the quarternion field admits a projectivley flat affine connection. 
We show that parabolic subgroups are autoparallel submanifolds and give a criterion the induced connection is projectively equivalent to a flat affine connection.  
\end{abstract}
\section{Introduction} 

An affine connection on a manifold is projectively flat if the connection is locally projectively equivalent to a flat affine connection.  Thus a flat affine connection is also projectively flat.  On any Lie group $L$ we can consider a left invariant affine connection,  however $L$ does not necessarily admit projectively flat affine connections. In fact any Lie group of dimension 
$\leq$ 5 admits a projectively flat affine connection (see \cite{hkato1}),  however 
6 dimensional real semisimple Lie group such as $O(3, \R) \times O(3, \R)$ and $SL(2, \R) \times SL(2, \R)$ does not admit them (cf. \cite{agaoka-kato}).  
We consider the existence problem of left invariant projectively flat affine connections on Lie groups, which is widely open.
This problem is closely related to classification of prehomogeneous vector spaces (abbrev. PVs) and left symmetric algebras (cf. \cite{hkato2}). 

      
In particular from the viewpoint of submanifolds 
we use an projectively flat affine connection on Lie groups with Lie algebra $\sll(n, \R)$ or $\sll(n, \bH)$, which was constructed by Agaoka \cite{agaoka}.  These  are the only simple Lie algebras which admit left invariant projectively flat affine connections (see Urakawa \cite{urakawa}, Elduque \cite{elduque}). 
The special linear Lie algebras equipped with the projectively flat affine connection correspond to associative algebras with identity, which was proved by Nomizu and Pinkall \cite{nomizu-pinkall}. 
Associative algerbra with identities are special classes of infinitesimal PVs.

We remark that semisimple Lie groups do not admit flat affine connections. 
However on the borel subalgebra of semisimple Lie algebra a left invariant flat affine connection was intrinsically 
constructed by Takemoto and Yamaguchi \cite{takemoto-yamaguchi}.    
In this article with respect to the connection on special linear groups 
we investigate their parabolic subgroups and 
solvable Lie subgroups associated to the Langlands decomposition, and show that they are autoparallel submanifolds.  
The construction of those subgroups is adapted from Tamaru \cite{tamaru}.  
In the main theorem of the article we give a criteria that the induced affine connection is projectively equivalent to a flat affine connection.




\section{Preliminaries} 

Let $L$ be a Lie group of dimension $n$ and $\l$ its Lie algebra. 
Let $\nabla$ and $\nabla'$ be torsion-free affine connections on $L$. 
Connections $\nabla$ and $\nabla'$  are projectively equivalent if there exists a 1-form $\lambda$ on $L$ such that $\nabla_X Y - \nabla_X Y = \lambda(X)Y + \lambda(Y)X$ for vector fields $X$ and $Y$. 
A projective equivalence class $[\nabla]$ of torsion-free affine connection $\nabla$ is called a projective structure.     
The connection $\nabla$ is affinely flat if its curvature tensor vanishes, i.e. 
$R(X, Y)Z = \nabla_X\nabla_YZ - \nabla_Y\nabla_XZ - \nabla_{[X, Y]}Z = 0$.  
The connection $\nabla$ is projectivley flat if Weyl's projective curvature tensor vanishes for $n \geq 3$, i.e. 
$W(X, Y)Z = R(X, Y)Z + [P(X,Y)- P(Y, X)]Z - [P(Y, Z)X - P(X, Z)Y]=0$ (cf. \cite{nomizu-sasaki}). 
For $n = 2$, $\nabla$ is projectivley flat if $\nabla_X P(Y, Z) = \nabla_Y P(X, Z)$. 
Here $P$ is the $(1, 1)$-tensor defined by  $P(X, Y) = \frac{1}{n^2-1}[n Ric(X, Y) + Ric(Y, X)]$. 
If $\nabla$ is left invariant projectively flat, then $[\nabla]$ is called a left invariant flat projective structure. 
 
In \cite{agaoka} Agaoka defined a Lie algebra homomorphism $\l \to \sll(n+1, \R)$ called a (P) - homomorphism, and  
established the following bijection: 
$\{$Left invariant projectively flat affine connection $\nabla$ on $L$$\}$ $\to$ 
$\{$(P)-homomorphism $f: \l \to \sll(n+1, \R)$$\}$ via Cartan connections.   
Denote by  
$\{e_{1}, \ldots, e_{n+1}\}$ the standard basis of $\R^{n+1}$ and by $\{X_1, \ldots, X_n\}$ a basis of $\l$. 
Then a Lie algebra homomorphism $f: \l \to \sll(n+1, \R)$ is called a (P)-homomorphism if $f(X_i)e_{n+1} = e_i + \alpha e_{n+1}$ for some $\alpha \in \R$. 
We can directly prove the correspondence as follows.  As a result (P)-homomorphism $f$ corresponding to $\nabla$ is given by  
\begin{equation} \label{form of rep1} 
f(X) = 
\begin{pmatrix} 
\nabla_X - \frac{1}{n+1}tr\nabla_X I_n & X \\
-P(X, \cdot) & - \frac{1}{n+1}tr\nabla_X
\end{pmatrix}. 
\end{equation}
Denote by $f_1$ the $(2, 1)$-part of $f$ with respect to the $2 \times 2$ block decomposition. 
Put $\g = \sll(n+1, \R)$ and we call $f_1$ the $\g_1$ component of $f$. 
Then we have 
$f_1(X) = -P(X, \cdot)$.     
Indeed we can see that 
torsion free affine connection $\nabla$ is projectively flat iff a linear map $f: \l \to \gl(n+1, \R)$ defined by 
(\ref{form of rep1}) is a Lie algebra homomorphism: 
\begin{eqnarray*}
&& [f(X), f(Y)]Z - f([X, Y])Z \\ 
&=&  
\{[
\begin{pmatrix} 
\nabla_X  & X \\
-P(X, \cdot) & 0
\end{pmatrix},  
\begin{pmatrix} 
\nabla_Y  & Y \\
-P(Y, \cdot) & 0
\end{pmatrix}
]  
- 
\begin{pmatrix} 
\nabla_{[X, Y]} - \frac{1}{n+1}\tr\nabla_{[X, Y]} I_n & [X, Y] \\ 
-P([X, Y], \cdot) & \hspace{-5mm} - \frac{1}{n+1}\tr\nabla_{[X, Y]} 
\end{pmatrix}\} 
\begin{pmatrix}
Z \\ 
0 
\end{pmatrix} \quad \quad 
\\ 
&=&  
\begin{pmatrix} 
\nabla_X \nabla_Y Z - P(Y, Z)X \\
-P(X, \nabla_Y Z)
\end{pmatrix}
- 
\begin{pmatrix} 
\nabla_Y \nabla_X Z - P(X, Z)X \\
-P(Y, \nabla_X Z)
\end{pmatrix}
-
\begin{pmatrix} 
\nabla_{[X, Y]} Z - \frac{\tr\nabla_{[X, Y]}}{n+1}Z \\
-P([X,Y], Z)
\end{pmatrix} \\
&=&  
\begin{pmatrix} 
W(X, Y)Z \\
-P(X, \nabla_Y Z) + P(Y, \nabla_X Z) + P([X,Y], Z)  
\end{pmatrix}. 
\end{eqnarray*}
Here we used the equality $\tr \nabla_{[X, Y]} = - \tr R(X, Y) = (n+1)[P(X, Y)- P(Y, X)]$. 
The second row vanishes iff  
we have the Codazzi's equation   
$\nabla_X P(Y, Z) = \nabla_Y P(X, Z)$. Recall that we have the Codazzi's equation if the Weyl's projective curvature vanishes.  
We also have the following equality: 
\[[f(X), f(Y)]v - f([X, Y])v  =  
\begin{pmatrix} 
\nabla_X Y - \nabla_Y X - [X, Y] \\
-P(X, Y) + P(Y, X) + \frac{\tr \nabla_{[X, Y]}}{n+1}
\end{pmatrix}. 
\] 

Thus $\nabla$ is projectively flat iff $f$ is a Lie algebra homomorphism.  
This proof is a generalization of the proof in \cite{agaoka-kato}, which is dealing with the case of Ricci symmetric. 
When $\nabla$ is Ricci symmetric,  the corresponding homomorphism $f$ is of the form 
\begin{equation} 
f(X) = 
\begin{pmatrix} 
\nabla_X - \frac{1}{n+1}tr\nabla_X & X \\
-\frac{1}{n-1}Ric(X, )  & - \frac{1}{n+1}tr\nabla_X
\end{pmatrix}.
\end{equation} 
We denote by $\gamma(X, Y)$ the normalized Ricci tensor $\frac{1}{n-1}Ric(X, Y)$.   
In this case we can modify a linear map $f$ by   
\begin{equation} \label{form of rep}
f'(X) = 
\begin{pmatrix} 
\nabla_X  & X \\
-\gamma(X, )  & 0 
\end{pmatrix} 
\end{equation} 
so that $f': \l \to \gl(n+1, \R)$ gives again a Lie algebra representation.  Conversely if $f: \l \to \gl(n+1, \R)$ is a representation of the form 
$f(X) = 
\begin{pmatrix}
\nabla_X & X \\ 
f_1(X, \cdot) & 0 
\end{pmatrix}$, then we have $f_1(X, Y) = f_1(Y, X) = - \gamma(X, Y)$. Thus Ricci tensor is symmetric. 
Moreover if $\nabla$ is affinely flat, then Ricci tensor vanishes and $f_1 = 0$. 
Conversely if $f_1 = 0$, then we can directly prove that $\nabla$ is affinely flat (cf. Theorem 3.7 of \cite{agaoka}).  

Let $\{e_1, \ldots, e_{n+1}\}$ be the standard basis of $\R^{n+1}$. 
From the expression (\ref{form of rep1}) of (P)-homomorphism $f: \l \to \sll(n+1, \R)$ 
we can easily see that $f$ satisfies the condition 
$f \otimes \Lambda_1 (\l \oplus \R)e_{n+1} = \R^{n+1}$. Thus $f$ gives a representation called 
an infinitesimal prehomogeneous vector space.  Conversely a given PV $f: \l \oplus \R \to \gl(V)$ such as $\dim \l +1 = \dim V$ 
gives a (P)-homomorphism as follows:

Let $v$ be a generic point of a PV $(\l \oplus \R, f \otimes \Lambda_1, V)$. Then we define a matrix $P$ to be 
$(f(X_1)v, \cdots, f(X_n)v, v)$. Consider the projection $\gl(n+1, \R) \to \sll(n+1, \R)$ and denote its image of $f$ by $\bar{f}$.   
Then $P^{-1} \bar{f} P$ gives a (P)-homomorphism $\l \to \sll(n+1, \R)$.  
The $\g_0$ part of the (P)-homomorphism $P^{-1} \bar{f} P$ gives a left invariant 
projectively flat affine connection. 

There is a unique Lie algebra representation of $\l$ called a (N)-homomorphism which is 
projectively equivalent to $P^{-1} \bar{f} P$ (cf. \cite{agaoka}). This (N)-homomorphism is explicitly given by 
$\bar{P}^{-1} \bar{f} \bar{P}$, where $\bar{P}$ is the matrix $(\bar{f}(X_1)v, \cdots, \bar{f}(X_n)v, v)$. 
We denote $\bar{P}^{-1} \bar{f} \bar{P}$ by $f_v$. 
Now we introduce an equivalence relation. 
We denote $(g, w) \sim (f, v)$ iff there exists $Q  \in GL(n+1, \R)$  such that $\bar{g} = Q \bar{f} Q^{-1}$ and 
$w = Q v$.  In \cite{hkato1} the author 
proved the correspondence between invariant flat complex projective structures on complex Lie groups and the 
equivalence classes of pairs of infinitesimal prehomogeneous vector spaces and their generic points. 
By considering the correspondence over the real number fields we obtain  
the following one-to-one correspondence:  
\begin{eqnarray*} 
&& \{\mbox{Left invariant  flat projective structure on} \  L\} \\ 
&\to & \{(f, v) \mid (\l \oplus \R, f \otimes \Lambda_1, V \otimes \R) \ \mbox{is a PV s.t.} 
 \dim l +1 = \dim V \}/_\sim. 
\end{eqnarray*}   
Note that we can also directly prove the above one-to-one correspondence. 
Furthermore if $(f, v) \sim (g, w)$, then we have $f_v = g_w$. Hence from this correspondence we can recover the one-to-one correspondence in \cite{agaoka} between left invariant flat projective structures on Lie groups and (N)-homomorphisms. 



Now we state an easy but important fact.  
\begin{prop}\label{autoparallel}
Let $(L, \nabla)$ be a left invariant projectively flat Lie group and $S$ be a Lie subgroup of $L$. 
If $S$ is an autoparallel submanifold, then the induced left invariant affine connection on $S$ is projectively flat.  
\end{prop}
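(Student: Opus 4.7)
The plan is to use the correspondence, recalled above, between left invariant projectively flat affine connections on an $n$-dimensional Lie group and (P)-homomorphisms into $\sll(n+1,\R)$. From the (P)-homomorphism $f:\l \to \sll(n+1,\R)$ attached to $\nabla$, I will build a (P)-homomorphism $\bar f:\s \to \sll(m+1,\R)$ (with $m=\dim S$) and then check that the connection it determines on $S$ agrees with the one induced from $\nabla$.

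First I would choose a basis $\{X_1,\ldots,X_n\}$ of $\l$ whose initial segment $\{X_1,\ldots,X_m\}$ spans $\s$, and set $V:=\mathrm{span}\{e_1,\ldots,e_m,e_{n+1}\}\subset\R^{n+1}$. Using the explicit form (\ref{form of rep1}) together with the autoparallel assumption $\nabla_X Y\in\s$ for $X,Y\in\s$, I would verify that $f(X)$ preserves $V$ for every $X\in\s$: on $e_{n+1}$ this is the (P)-condition $f(X)e_{n+1}=X+\alpha e_{n+1}\in V$, while on $e_j$ with $j\le m$ the upper block contributes a combination of $e_1,\ldots,e_m$ by autoparallelism and the lower block contributes a multiple of $e_{n+1}$. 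Therefore the restriction $\hat f:=f|_{\s}$ lands in $\gl(V)\cong\gl(m+1,\R)$, and since $\s$ is a Lie subalgebra of $\l$, it is a Lie algebra homomorphism.

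Next I would normalize the trace by setting $\bar f(X):=\hat f(X)-\tfrac{1}{m+1}\tr\hat f(X)\,I_{m+1}$. Because scalar matrices are central, $\bar f:\s\to\sll(m+1,\R)$ remains a Lie algebra homomorphism, and the identity $\bar f(X)e_{n+1}=X+\beta(X)e_{n+1}$ shows that it satisfies the (P)-condition. By the correspondence, $\bar f$ determines a left invariant projectively flat affine connection on $S$.

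The remaining step, which is the one needing the most care, is to verify that this connection coincides with the induced connection $\bar\nabla_X Y=\nabla_X Y$ for $X,Y\in\s$. Writing $\hat f(X)$ in block form via (\ref{form of rep1}), a direct computation gives $\tr\hat f(X)=\tr(\nabla_X|_{\s})-\tfrac{m+1}{n+1}\tr\nabla_X$; after subtracting the scalar correction the $(1,1)$-block of $\bar f(X)$ collapses to exactly $\bar\nabla_X-\tfrac{1}{m+1}\tr(\bar\nabla_X)\,I_m$, which is the normalization required by (\ref{form of rep1}) for the connection $\bar\nabla$ on $S$. Hence the projectively flat connection associated to $\bar f$ is the induced connection, and the proposition follows.
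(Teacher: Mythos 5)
Your proof is correct, but it takes a genuinely different route from the paper's. The paper argues entirely at the level of curvature tensors: by the Gauss equation for autoparallel submanifolds, $R^S(X,Y)Z=R(X,Y)Z=-[P(X,Y)-P(Y,X)]Z+[P(Y,Z)X-P(X,Z)Y]$, whence $Ric^S(X,Y)=mP(X,Y)-P(Y,X)$, and a short computation shows the normalized tensor satisfies $P^S=P|_S$, so $W^S=W|_S=0$. You instead work on the representation side: you restrict the (P)-homomorphism $f$ to $\s$ and to the invariant subspace $V=\langle e_1,\ldots,e_m,e_{n+1}\rangle$ (invariance being exactly the autoparallel condition), renormalize the trace, and check that the resulting (P)-homomorphism $\bar f:\s\to\sll(m+1,\R)$ has $\g_0$-part equal to the induced connection; your trace bookkeeping $\tr\hat f(X)=\tr(\nabla_X|_\s)-\tfrac{m+1}{n+1}\tr\nabla_X$ is right, and tracelessness of $\hat f([X,Y])$ (a commutator) keeps $\bar f$ a homomorphism. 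The one point you should make explicit is which direction of Agaoka's correspondence you are invoking: the displayed equivalence in the paper (``$\nabla$ projectively flat iff the map (\ref{form of rep1}) is a homomorphism'') presupposes that the $(2,1)$-block is $-P^S(X,\cdot)$ built from $Ric^S$, whereas your $\bar f$ has $(2,1)$-block $-P(X,\cdot)|_\s$; you therefore need either the full statement of Agaoka's bijection (any (P)-homomorphism yields a projectively flat connection via its $\g_0$-part), or a one-line argument that the homomorphism property forces the $(2,1)$-block to equal $-P^S(X,\cdot)$ --- which, incidentally, recovers the paper's identity $P^S=P|_S$. The trade-off: the paper's proof is self-contained and produces the explicit formula $P^S=P|_S$ (and $\gamma^S=\gamma|_S$), which is reused later in computing $\gamma^{\q_{\Lambda'}}$; yours is shorter once the correspondence is granted and fits more naturally with the representation-theoretic computations in the rest of the paper.
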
 
\begin{proof} 
Denote by $\nabla^S$ and $R^S$ the induced connection on $S$ and its curvature tensor respectively. 
Then we have  $R^S(X, Y)Z = - [P(X,Y)- P(Y, X)]Z + [P(Y, Z)X - P(X, Z)Y]$  for left invariant vector fields $X, Y, Z$ on $S$ (cf. the appendix of \cite{nomizu-sasaki}).  
Denoting by $m$ the dimension of $S$, we have 
$Ric^S(X, Y) = m P(X, Y) - P(Y, X) = \frac{1}{n^2-1}[(mn-1)Ric(X,Y) + (m-n)Ric(Y, X)]$. 
It follows that 
$P^S(X, Y) = \frac{1}{m^2-1}[m Ric^S(X, Y) + Ric^S(Y, X)]
= P(X, Y)$. 

Then the Weyl's projective curvature $W^S$ of $S$ coincides with the restriction of $W$, i.e.  
$W^S(X, Y)Z = W(X, Y)Z$. 
Hence $(S, \nabla^S)$ is a projectively flat Lie subgroup. 
\end{proof}
\begin{rem}
Denote by $\gamma^S$ the normalized Ricci tensor of $(S, \nabla^S)$.  
In Proposition \ref{autoparallel} if $\nabla$ is Ricci symmetric, then $\gamma^S = \gamma|_S$. 
\end{rem}

\section{projectively flat Lie subgroups} 
In this section let us begin by recalling the parabolic subalgebras and the Iwasawa decomposition of semisimple Lie algebras,  following \cite{tamaru}, which the reader can consult for detail.  
Let $\g$ be a Lie algebra and $\sigma$ a Cartan involution. Denote by $\kk$ and $\p$ an eigenspace with eigenvalue 1 and 
 $-1$ respectively. Then we have the Cartan decomposition $\g = \kk \oplus \p$. 
Let $\da$ be a maximal abelian subspace of $\p$, and $\bigtriangleup$ be the restricted root systems of $\g$ 
with respect to $\da$. 
Denote by $\g_0$ the centralizer of $\da$ in $\g$ and by $\g_\alpha$ the root space of a root $\alpha$. 
Then $\g$ is decomposed into the direct sum of vector spaces $\g = \g_0 + \sum_{\alpha \in \bigtriangleup} \g_\alpha$. 
Let $\Lambda = \{\alpha_1, \ldots, \alpha_r\}$ be a set of simple roots of $\bigtriangleup$. 
Thus $<\Lambda>_{\R} = \da^*$.  
Denote by $\{H^1, \ldots, H^r\}$ the dual basis of $\Lambda$.  

Let $\Lambda'$ be a proper subset of $\Lambda$. Suppose $\Lambda \setminus \Lambda' = \{\alpha_{i_1}, \ldots, \alpha_{i_k} \}$. 
Put $Z:= H^{i_1} + \cdots + H^{i_k}$. The characteristic element $Z$ defines the subspace 
$\g^0 = \g_0 + \sum_{\alpha(Z)=0}\g_\alpha$ and $\g^k = \sum_{\alpha(Z)=k} \g_\alpha$ for $k \neq 0$. 
Then we obtain the gradation $\g = \sum \g^k$.  

The nonnegative part $\q_{\Lambda'} = \sum_{k \geq 0} \g^k$ gives a parabolic subalgebra, which is equal to  
$\g_0 + \sum_{\alpha \in \bigtriangleup, \alpha(Z) \geq 0}\g_\alpha$. We also have $\q_{\Lambda'}$ $=$  
$\g_0 +  \sum_{\beta \in \bigtriangleup^+ \cup <\wedge'>}\g_\beta$. 
The Langlands decomposition is given by $\q_{\Lambda'} = \m_{\Lambda'} + \da_{\Lambda'} + \n_{\Lambda'}$, 
where the direct summands are defined as follows: 

\noindent 
(1) $\da_{\Lambda'} = <H^{i_1}, \ldots, H^{i_k}>_{\R}$,  \\
\noindent 
(2) $\m_{\Lambda'} = \g^0 \ominus \da_{\Lambda'}$,  \\
\noindent 
(3) $\n_{\Lambda'} = \sum_{k>0} \g^{k}$.  

\medskip 
Then the subalgebra $\s_{\Lambda'} := \da_{\Lambda'} + \n_{\Lambda'}$ of $\q_{\Lambda'}$ is solvable.  
Note that $\n_{\Lambda'} = \sum_{\beta \in \bigtriangleup^+ - <\Lambda'>^+}\g_\beta$.  
In particular when $\Lambda' = \emptyset$, we have $\q_\emptyset = \g_0 + \sum_{\beta \in \bigtriangleup^+}\g_\beta$, which is called a minimal parabolic subalgebra. In this case the characteristic element $Z = H^1 + \cdots +H^r$ gives a Langlands  decomposition 
$\q_{\emptyset} = \m_{\emptyset} + \da_{\emptyset} + \n_{\emptyset}$, where $\da_{\emptyset} = \da$ and 
$\n_{\emptyset} = \sum_{\beta \in \bigtriangleup^+}\g_\beta$. 
We also have the decomposition $\g = \kk + \da + \n_{\emptyset}$ 
which is called the Iwasawa decomposition. Thus the solvable subalgebra $\s_{\emptyset} = \da + \n_{\emptyset}$ of $\q_{\emptyset}$ is same as the solvable  subalgebra of $\g$ associated to the Iwasawa decomposition. 
For any proper subset $\Lambda' \subset \Lambda$ we have $\s_{\Lambda'} \subset \s_{\emptyset}$.

\begin{prop} 
The solvable Lie algebra $\s_\emptyset$ 
admits a flat affine connection $\nabla$. 
The solvable Lie algebra $\s_\wedge'$ is an autoparallel subalgebra of $(\s_\emptyset, \nabla)$. 
\end{prop}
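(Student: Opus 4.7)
The plan is to invoke the explicit flat affine connection constructed by Takemoto and Yamaguchi \cite{takemoto-yamaguchi} on the Iwasawa solvable $\s_\emptyset$, and then to verify autoparallelism of $\s_{\Lambda'}$ directly from the defining formulas. I would first recall that their construction is adapted to the restricted root space decomposition $\s_\emptyset = \da \oplus \bigoplus_{\alpha \in \bigtriangleup^+} \g_\alpha$: one sets $\nabla_H H' = 0$, $\nabla_H E = \alpha(H) E$, and $\nabla_E H = 0$ for $H, H' \in \da$ and $E \in \g_\alpha$, and defines $\nabla_E E'$ for $E \in \g_\alpha$, $E' \in \g_\beta$ to be an element of $\g_{\alpha+\beta}$ (taken to be zero if $\alpha+\beta$ is not a root), prescribed by a triangular recipe compatible with a fixed ordering of $\bigtriangleup^+$ so that the torsion-free identity $\nabla_E E' - \nabla_{E'} E = [E, E']$ is satisfied. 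Flatness of $\nabla$ is then a consequence of the Jacobi identity together with the compatibility of each formula with the root grading.

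For the second assertion I would check that each defining formula preserves $\s_{\Lambda'} = \da_{\Lambda'} \oplus \bigoplus_{\beta(Z)>0} \g_\beta$. By bilinearity it suffices to run through the four types of basic pairs $(X, Y)$. If $X, Y \in \da_{\Lambda'}$ then $\nabla_X Y = 0$; if $X \in \da_{\Lambda'}$ and $Y \in \g_\beta$ with $\beta(Z) > 0$, then $\nabla_X Y = \beta(X) Y$ lies in $\g_\beta \subset \n_{\Lambda'}$; if $X \in \g_\beta$, $Y \in \da_{\Lambda'}$ then $\nabla_X Y = 0$; and if $X \in \g_\beta$, $Y \in \g_\gamma$ with $\beta(Z), \gamma(Z) > 0$, then $\nabla_X Y$ either vanishes or lies in $\g_{\beta+\gamma}$, which is contained in $\n_{\Lambda'}$ because $(\beta + \gamma)(Z) = \beta(Z) + \gamma(Z) > 0$. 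In every case $\nabla_X Y \in \s_{\Lambda'}$, so $\s_{\Lambda'}$ is an autoparallel subalgebra of $(\s_\emptyset, \nabla)$.

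The main technical obstacle is confirming that the Takemoto-Yamaguchi formula for $\nabla_E E'$ with $E \in \g_\alpha$, $E' \in \g_\beta$ genuinely lands in the single root space $\g_{\alpha+\beta}$, i.e.\ is grading-preserving, rather than distributing across several root spaces or producing a component in $\da$. This grading compatibility is precisely what allows the positivity condition $\beta(Z) > 0$ defining $\n_{\Lambda'}$ to propagate through covariant differentiation; once it is in place the autoparallelism check reduces, uniformly in $\Lambda'$, to the elementary case analysis above.
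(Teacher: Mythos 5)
Your overall strategy --- cite the Takemoto--Yamaguchi construction for the flat connection on $\s_\emptyset$ and then verify autoparallelism of $\s_{\Lambda'}$ by a case analysis on the four types of pairs --- is the same as the paper's. But you have left the one load-bearing ingredient unspecified, and you say so yourself: you never write down what $\nabla_E E'$ is for $E \in \g_\alpha$, $E' \in \g_\beta$, and your description of it as ``a triangular recipe compatible with a fixed ordering of $\bigtriangleup^+$'' is not what the construction actually does. The connection is defined using the positive gradation $\n_\emptyset = \sum_{k>0}\g^k$ induced by the characteristic element $Z = H^1 + \cdots + H^r$ (equivalently, by the height $|\alpha|$ of a positive root): for $X \in \g^i$ and $Y \in \g^j$ one sets $\nabla_X Y = \frac{j}{i+j}[X,Y]$, together with $\nabla_H H' = 0$, $\nabla_H Y = [H,Y]$ and $\nabla_Y H = 0$ for $H, H' \in \da$ and $Y \in \n_\emptyset$. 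No ordering of $\bigtriangleup^+$ enters.

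Once this formula is on the table, the ``main technical obstacle'' you flag evaporates: $\nabla_{\g_\alpha}\g_\beta \subset [\g_\alpha,\g_\beta] \subset \g_{\alpha+\beta}$ holds automatically, torsion-freeness is the identity $\frac{j}{i+j} + \frac{i}{i+j} = 1$, and flatness is exactly Theorem 1 of Takemoto--Yamaguchi, which applies because $\da$ is abelian and preserves the gradation. Your autoparallelism check then goes through as written; alternatively, since every value of $\nabla$ is a scalar multiple of a Lie bracket, it suffices to observe that $\da_{\Lambda'}$ and $\n_{\Lambda'}$ are (graded) subalgebras of $\da$ and $\n_\emptyset$ respectively, which is how the paper phrases it. So the gap is not a wrong idea but an unfinished one: without the explicit formula, your argument is circular at precisely the point you yourself identify as the obstacle.
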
 
\begin{proof} 
By definition $\s_\emptyset = \da_\emptyset + \n_\emptyset$.  
The characteristic element $Z = H^1 + \cdots +H^r$ defines the subspace $\g^{k} = \sum_{\alpha(Z)=k}\g_\alpha$ and 
we have $\n_\emptyset  =  \sum_{k>0} \g^{k}$.  Thus $\n_\emptyset$ is graded by positive integers. 
Furthermore $\da_\emptyset$ is abelian and preserves the gradation, i.e.  $[\da_\emptyset, \g^{k}] \subset \g^{k}$. 
It follows that $\s_\emptyset$ admits a flat affine connection $\nabla$.  
Here we recall the construction of $\nabla$ on $\s_\emptyset$.  
If $X, Y \in \s_\emptyset$, then $\nabla$ is given as follows: 
\[ 
\begin{array}{cc|c} 
X & Y  &  \nabla_X Y \\
\hline 
\da & \da & 0 \\
\g^i &  \g^j & \frac{j}{i+j}[X, Y] \\
\da & \n_\emptyset & [X, Y] \\
\n_\emptyset & \da & 0 
\end{array}.  
\] 
Now we consider the subalgebra $\s_{\Lambda'} = \da_{\Lambda'} + \n_{\Lambda'}$. 
The two summands $\da_{\wedge'}$ and $\n_{\wedge'}$ are subalgebras of $\da_\emptyset$ and $\n_\emptyset$ respectively. 
Thus $\s_\wedge'$ is an autoparallel subalgebra of $(\s_\emptyset, \nabla)$ from the construction of $\nabla$.   

\end{proof} 
This proof concerning $\s_\emptyset$ is the same as the one of Theorem 1 in \cite{takemoto-yamaguchi}. 
Indeed when we write $\alpha = \sum_{i=1}^r m_i \alpha_i$, we have $\alpha(Z) = \sum_{i=1}m_i = |\alpha|$.  
Thus $\sum_{|\alpha|=k}\g_\alpha = \g^k$, which also shows that $\n_\emptyset = \sum_{\beta \in \bigtriangleup^+}\g_\beta$ is graded by positive integers. 

\begin{rem} 
The nilpotent part $\n_{\Lambda'}$ of $\s_{\Lambda'}$ also has the gradation $\n_{\Lambda'} = \sum_{k>0}g^k$ defined by the 
characteristic element $Z:= H^{i_1} + \cdots + H^{i_k}$. Hence $\da_{\Lambda'} + \n_{\Lambda'}$ also has the semidirect structure such that 
the adjoint action of $\da_{\Lambda'}$ on $\n_{\Lambda'}$ preserves its gradation. Hence by the same construction we obtain the flat affine connection $\nabla^{\Lambda'}$ on $\s_\Lambda'$. Consequently now we have two flat affine connections on  $\s_\Lambda'$, one is the induced connection $\nabla$ from $(\s_\emptyset, \nabla^\emptyset)$ and the other is $\nabla^{\Lambda'}$. 
These two affine connections are generally different as it is verified by the following examples.  

We consider $\g = \sll(n, \K)$, where $\K = \R$ or $\bH$. In this case a Cartan involution is given by  $\sigma (X) = -{}^t X$ when $\K = \R$ and $\sigma (X) = -{}^t \bar{X}$ when $\K = \bH$. Then the maximal abelian subspace of $\p$ is the set of real diagonals in $\sll(\K, \R)$. Let $\lambda_i$ be a linear function $\da \to \R$ defined by 
$\lambda_i (E_{jj}) = \delta_{ij}$. 
The root system $\bigtriangleup(\g, \da)$ is given by $\{\lambda_i - \lambda_j \ (1 \leq  i \neq j \leq n)\}$. 
The root space $\g_{\lambda_i - \lambda_j}$ of the root $\lambda_i - \lambda_j$ is $\K E_{ij}$.  
Put $\alpha_i = \lambda_i - \lambda_{i+1}$. Then a set of simple roots $\Lambda$ is given by 
$\{\alpha_1, \ldots, \alpha_{n-1}\}$.  
The centralizer $\g_0$ of $\da$ in $\g$ is given by the diagonal part of $\sll(n, \K)$.

Let us consider the case $\g = \sll(4, \R)$. A set of simple roots is given by $\Lambda = \{\alpha_1, \alpha_2, \alpha_3\}$. 
Denote by $\{H^1, H^2, H^3\}$ the dual basis of $\Lambda$. Put $\Lambda' = \{\alpha_3\}$. 
Then we have $\da_{\Lambda'} = \langle H^1,  H^2 \rangle$ and $\n_{\Lambda'} = \g_{\alpha_1} +  \g_{\alpha_2} + \g_{\alpha_1+\alpha_2} + \g_{\alpha_2+\alpha_3} + \g_{\alpha_1+\alpha_2+\alpha_3}$. 
The characteristic element $Z = H^1 + H^2$ corresponding to $\Lambda'$ determines the gradation 
of $\n_{\Lambda'} = \g^1 + \g^2$ given by 
$\g^1 = \g_{\alpha_1} + \g_{\alpha_2} + \g_{\alpha_2+\alpha_3}$ and $\g^2 = \g_{\alpha_1+\alpha_2} + \g_{\alpha_1+\alpha_2+\alpha_3}$.  
Now we compare the two affine connections $\nabla^\emptyset$ and $\nabla^{\wedge'}$ on $\s_{\wedge'}$. 
By the straightforward computations the only difference between two connections are described as follows: 
\[
\begin{array}{ll} 
{\nabla^\emptyset}_{E_{12}} E_{24} = \frac{2}{3}E_{14},  &  {\nabla^{\wedge'}}_{E_{12}} E_{24} = \frac{1}{2}E_{14}, \\[3mm]  
{\nabla^\emptyset}_{E_{24}} E_{12} = \frac{1}{3}(-E_{14}),  & {\nabla^{\wedge'}}_{E_{12}} E_{24} = \frac{1}{2}(-E_{14}).
\end{array}
\]
All the other components has the same values.  
\end{rem} 

Agaoka \cite{agaoka}, Urakawa \cite{urakawa} and Elduque \cite{elduque} proved that simple Lie algebra $\g$ admits a left  invariant projectively flat affine connection iff 
$\g$ is $\sll(n, \R)$ or $\sll(n, \bH)$. Nomizu and Pinkall \cite{nomizu-pinkall} also proved that these Lie algebras are the only simple Lie algebras
admitting biinvariant projectively flat equiaffine connections. 
In fact they showed that 
a Lie algebra with a biinvariant projectively flat affine connection gives rise to 
an associative algebra with identity and vice versa which we now recall.  
Let $A$ be an associative algebra with unit $e$. Let $\tau$ be the linear function on $A$ defined by 
$\tau(u) := \tr(v \in A \mapsto uv \in A)$.  Denote by $\g$ the subspace $\{u \in A \mid \tau(u)=0\}$. We regard $A$ as a Lie algebra with 
the bracket $[u, v] = uv-vu$. Then $\g$ becomes a Lie subalgebra of $A$ such that $A = g \oplus \R e$. 
We define the left invariant affine connection $\nabla$ on $\g$ by $\nabla_XY = XY - \frac{\tau(XY)}{n+1}e$. 
Then $\nabla$ is verified to be biinvariant.  
Moreover $Ric(X, Y) = -\frac{n-1}{n+1} \tau(XY)$ and thus Ricci tensor is symmetric and Weyl's projective curvature vanishes.  
Consequently we obtain a left invariant projectively flat affine connection $\nabla$ on $\g$. 

In particular $\gl(n, \R)$ and $\gl(n, \bH)$ are associative algebras with unit, hence we obtain the Lie algebras equipped with 
left invariant projectively flat affine connections $(\sll(n, \R), \nabla)$ and $(\sll(n, \bH), \nabla)$.  
The function $\tau$ is given by $\tau(X) = n\tr X$ and $\tau(X) = 4n \mathrm{Re}\tr X$  
respectively for $\gl(n, \R)$ and  $\gl(n, \bH)$. 
Thus we have 
\begin{eqnarray*} 
\nabla_XY &=& XY - \frac{\tr XY}{n}I_n,  \quad  \quad \ \gamma(X, Y) = -\frac{\tr XY}{n}   \quad \quad  \mbox{on} \quad \sll(n, \R),   \\ 
\nabla_XY &=& XY - \frac{\mathrm{Re} \tr XY}{n}I_n, \quad  \gamma(X, Y) = - \frac{\mathrm{Re} \tr XY}{n} \quad  \mbox{on} \quad \sll(n, \bH).  
\end{eqnarray*}  

\begin{prop}\label{autoparallel parabolic in sl}
Parabolic subalgebras of $(\sll(n, \R), \nabla)$ and $(\sll(n, \bH), \nabla)$ are autoparallel.  
\end{prop}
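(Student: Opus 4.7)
The plan is to combine the explicit matrix description of $\q_{\Lambda'}$ with the explicit formula for $\nabla$ on $\sll(n, \K)$, $\K = \R$ or $\bH$, reducing the proposition to a one-line check that block upper triangularity is preserved under the operation $(X, Y) \mapsto XY - \frac{1}{n}(\tr XY) I_n$.

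First, I would unwind the root-theoretic description of $\q_{\Lambda'}$ into matrix form, using the setup already fixed in the preceding remark: $\da$ consists of real diagonal traceless matrices, $\g_{\lambda_i - \lambda_j} = \K E_{ij}$, and the simple roots are $\alpha_i = \lambda_i - \lambda_{i+1}$. If $\Lambda \setminus \Lambda' = \{\alpha_{i_1}, \ldots, \alpha_{i_k}\}$ with $i_1 < \cdots < i_k$, the characteristic element $Z = H^{i_1} + \cdots + H^{i_k}$ partitions $\{1, \ldots, n\}$ into consecutive blocks $B_1, \ldots, B_{k+1}$, and a direct evaluation of $\alpha(Z) \geq 0$ on the positive roots shows that $\q_{\Lambda'} = \g_0 + \sum_{\alpha(Z) \geq 0} \g_\alpha$ is precisely the subalgebra of block upper triangular traceless matrices with respect to this block partition.

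Second, the autoparallel check is essentially immediate. Take $X, Y \in \q_{\Lambda'}$. Both are block upper triangular, and block upper triangular matrices form a subalgebra of $M_n(\K)$ under matrix multiplication, so $XY$ is again block upper triangular. The identity $I_n$ is block diagonal, hence itself block upper triangular. Therefore
\[
\nabla_X Y = XY - \frac{\tr XY}{n} I_n
\]
is block upper triangular. Moreover $\tr(\nabla_X Y) = \tr XY - \frac{\tr XY}{n}\cdot n = 0$, so $\nabla_X Y$ is traceless, i.e.\ $\nabla_X Y \in \q_{\Lambda'}$. This shows that $\q_{\Lambda'}$, viewed as a left-invariant distribution on the corresponding Lie subgroup $Q_{\Lambda'}$, is stable under $\nabla$, which is the autoparallel condition. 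The quaternionic case $\g = \sll(n, \bH)$ is identical, with $\mathrm{Re}\tr$ in place of $\tr$: the same matrix argument applies verbatim since the trace-correction term remains a scalar multiple of $I_n$.

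There is no substantive obstacle. The essential point, and the only place where the special form of $\nabla$ enters, is that the trace-correction term $-\frac{1}{n}(\tr XY) I_n$ is always a scalar matrix, hence cannot disrupt block upper triangularity. This reflects the fact that the projectively flat connection on $\sll(n, \K)$ is inherited from the associative product on $\gl(n, \K)$ (via the construction of Nomizu--Pinkall recalled above), and that matrix multiplication respects the block upper triangular structure underlying any standard parabolic.
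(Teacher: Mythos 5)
Your proof is correct. It rests on the same two facts as the paper's: matrix multiplication of root vectors is compatible with the parabolic, and the trace-correction term $-\frac{1}{n}(\tr XY)I_n$ lands in $\g_0$. The difference is one of packaging. The paper never leaves the root-space language: it checks $\nabla_{\g_0}\g_0\subset\g_0$, $\nabla_{\g_\alpha}\g_0\subset\g_\alpha$, $\nabla_{\g_0}\g_\alpha\subset\g_\alpha$, observes $\nabla_{\g_\alpha}\g_\beta\subset\g_{\alpha+\beta}$ (with the special case $\beta=-\alpha$ giving $\nabla_{E_{ij}}E_{ji}=E_{ii}-\frac{1}{n}I_n\in\g_0$), and concludes because $\alpha(Z)\geq 0$, $\beta(Z)\geq 0$ imply $(\alpha+\beta)(Z)\geq 0$. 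You instead globalize: identify $\q_{\Lambda'}$ with the block upper triangular traceless matrices and invoke the fact that these form an associative subalgebra of $M_n(\K)$ containing the scalars. Your version is shorter and makes the only real input --- that the correction term is a scalar matrix --- completely transparent; it does require the (routine, and correctly sketched) translation of the root-theoretic definition of $\q_{\Lambda'}$ into the block picture, which the paper avoids. The paper's root-space formulation has the advantage that the intermediate statement $\nabla_{\g_\alpha}\g_\beta\subset\g_{\alpha+\beta}$ is reused verbatim in the next proposition for the solvable subalgebra $\s_{\Lambda'}=\da_{\Lambda'}+\n_{\Lambda'}$, which is \emph{not} a block upper triangular algebra in your sense, so your block argument would not transfer there without modification. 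One small point to keep straight in the quaternionic case: $\g_0$ is the full diagonal of $\sll(n,\bH)$ (including imaginary-quaternion diagonal entries), and tracelessness means $\mathrm{Re}\,\tr=0$; with that reading your computation $\mathrm{Re}\,\tr(\nabla_XY)=0$ goes through as stated.
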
 
\begin{proof} 
Let $\q_{\Lambda'}$ be a parabolic subalgebra of $\sll(n, \K)$ where $\K = \R$ or $\bH$. 
Denote by $Z$ the characteristic element determined by $\Lambda'$. 
We show that $\q_{\Lambda'}$ is closed under the multiplication $\nabla$. 
We consider the root space decomposition $\sll(n, \K) = \g_0 + \sum_{\alpha \in \bigtriangleup}\g_\alpha$. 
Then $\g_0$ is the diagonal of $\sll(n, \K)$ and  $\g_\alpha = \K E_{ij}$ for $\alpha = \lambda_i - \lambda_j$.  
Hence from the definition of $\nabla$ on $\sll(n, \K)$ for $\alpha \in \bigtriangleup^+ \cup <\Lambda'>$ 
obviously we have $\nabla_{\g_0} \g_0 \subset \g_0$, \  $\nabla_{\g_\alpha} \mathfrak{g}_0 \subset \g_\alpha$, \ 
$\nabla_{\mathfrak{g}_0} \g_\alpha \subset \g_\alpha$. 
Therefore it is enough to prove $\nabla_{\g_\alpha} \g_\beta \subset 
\sum_{\gamma \in \bigtriangleup^+ \cup <\Lambda'>}\g_\gamma$ for $\alpha, \beta \in \bigtriangleup^+ \cup <\Lambda'>$. 
To prove this we observe that $\nabla_{\g_\alpha} \g_\beta \subset \g_{\alpha +\beta}$ for $\alpha, \beta \in \bigtriangleup$. 

Firstly we consider the case $\beta = -\alpha$. 
Since $\nabla_{E_{ij}}E_{ji} = E_{ii} - \frac{1}{n}I_n$, we have $\nabla_{\K E_{ij}}\K E_{ji} \subset \g_0$. 
Thus $\nabla_{\g_\alpha} \g_{-\alpha} \subset \g_0$. 

Secondly suppose $\beta \neq - \alpha$. Then 
we have $\g_\alpha \g_\beta \subset \g_{\alpha+\beta}$. This yields that $\nabla_{\g_\alpha} \g_\beta \subset \g_{\alpha +\beta}$. 
Therefore $\alpha + \beta \notin \bigtriangleup$ iff $\nabla_{\g_\alpha} \g_\beta = 0$ and $\nabla_{\g_\beta} \g_\alpha = 0$.   
On the other hand if $\alpha + \beta \in \bigtriangleup$ and moreover $\alpha(Z) \geq 0$, $\beta(Z) \geq 0$, then we have 
$\nabla_{\g_\alpha} \g_\beta \subset \sum_{\gamma \in \bigtriangleup, \gamma(Z) \geq 0}\g_\gamma$.  
Consequently $\q_{\Lambda'}$ is closed under the multiplication $\nabla$.  
\end{proof}

\begin{prop}\label{autoparallel solvable in sl}
The solvable subalgebra $\s_{\Lambda'}$ is autoparallel in $(\sll(n, \K), \nabla)$. 
\end{prop}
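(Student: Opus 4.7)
The plan is to mimic the proof of Proposition \ref{autoparallel parabolic in sl} and show directly that $\s_{\Lambda'}=\da_{\Lambda'}+\n_{\Lambda'}$ is closed under the product $\nabla_X Y = XY-\frac{\tr XY}{n}I_n$ (respectively $XY-\frac{\mathrm{Re}\tr XY}{n}I_n$ in the quaternionic case). The verification splits into four subcases according to which summand contains $X$ and $Y$, namely $\da_{\Lambda'}\times\da_{\Lambda'}$, $\da_{\Lambda'}\times\n_{\Lambda'}$, $\n_{\Lambda'}\times\da_{\Lambda'}$, and $\n_{\Lambda'}\times\n_{\Lambda'}$.

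Three of these subcases essentially follow the template of Proposition \ref{autoparallel parabolic in sl}. For $\nabla_{\n_{\Lambda'}}\n_{\Lambda'}$ I would use the observation $\nabla_{\g_\alpha}\g_\beta\subset\g_{\alpha+\beta}$ whenever $\beta\neq-\alpha$: if $\alpha,\beta\in\bigtriangleup^+$ satisfy $\alpha(Z),\beta(Z)>0$ then $\alpha+\beta\neq 0$, and if $\alpha+\beta$ happens to be a root then $(\alpha+\beta)(Z)>0$, which places $\g_{\alpha+\beta}$ inside $\n_{\Lambda'}$. For the two mixed cases, a diagonal $X=\sum c_l E_{ll}\in\da_{\Lambda'}$ satisfies $XE_{ij}=c_i E_{ij}$ and $E_{ij}X=c_j E_{ij}$; both are traceless, so no $I_n$-correction appears and the result remains in the root space $\g_{\lambda_i-\lambda_j}\subset\n_{\Lambda'}$.

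The crux is the abelian-abelian case $\nabla_{\da_{\Lambda'}}\da_{\Lambda'}\subset\da_{\Lambda'}$. I would first make $\da_{\Lambda'}$ explicit: writing $\Lambda\setminus\Lambda'=\{\alpha_{i_1},\ldots,\alpha_{i_k}\}$ with $i_1<\cdots<i_k$, a direct dual-basis calculation shows that each $H^{i_j}$ is a traceless diagonal matrix whose diagonal is constant on each of the blocks $\{1,\ldots,i_1\},\{i_1+1,\ldots,i_2\},\ldots,\{i_k+1,\ldots,n\}$. Hence $\da_{\Lambda'}$ is exactly the space of traceless diagonal matrices that are scalar on each block of this partition. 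Since the product of two block-scalar diagonal matrices is again block-scalar (and diagonal), and since $I_n$ is trivially block-scalar, the matrix $\nabla_X Y = XY-\frac{\tr XY}{n}I_n$ is block-scalar and traceless by construction, so it lies in $\da_{\Lambda'}$.

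The one step requiring a little care is this block-scalar identification of $\da_{\Lambda'}$; once it is in hand the four cases together show that $\s_{\Lambda'}$ is closed under $\nabla$, i.e.\ autoparallel. The quaternionic case $\K=\bH$ transports verbatim by replacing $\tr$ with $\mathrm{Re}\tr$, since the root-space combinatorics and the block-scalar description of $\da_{\Lambda'}$ are insensitive to the base field.
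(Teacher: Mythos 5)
Your proposal is correct and follows essentially the same case-by-case closure argument as the paper, including the key observation $\nabla_{\g_\alpha}\g_\beta\subset\g_{\alpha+\beta}$ for the nilpotent--nilpotent case. The only (minor) divergence is in the abelian--abelian case: the paper computes $\nabla_{H^i}H^j=\frac{1}{n}[(n-j)H^i-iH^j]$ explicitly, exhibiting the product as a combination of the two inputs and hence as an element of $\da_{\Lambda'}$, whereas you identify $\da_{\Lambda'}$ with the traceless block-scalar diagonal matrices and argue closure structurally --- both routes are valid.
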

\begin{proof} 
Obviously we have $\nabla_{\da_{\Lambda'}}\n_{\Lambda'} \subset \n_{\Lambda'}$ and $\nabla_{\n_{\Lambda'}}\da_{\Lambda'} \subset \n_{\Lambda'}$.   
Now assume that $\g_\alpha, \g_\beta \subset \n_{\Lambda'}$. Then $\alpha + \beta \neq 0$ and $\alpha +\beta (Z) > 0$. 
As we have seen in the proof of Proposition \ref{autoparallel parabolic in sl}, $\nabla_{\g_\alpha} \g_\beta \subset 
\g_{\alpha +\beta}$. 
Thus if $\alpha + \beta \notin \bigtriangleup$, then $\nabla_{\g_\alpha} \g_\beta =0$. If $\alpha + \beta \in \bigtriangleup$, then 
$\nabla_{\g_\alpha} \g_\beta \subset \g_{\alpha +\beta} \subset \n_{\Lambda'}$. 

Finally we show $\nabla_{\da_{\Lambda'}}\da_{\Lambda'} \subset \da_{\Lambda'}$. 
The dual basis $\{H^1, \ldots, H^{n-1}\}$ of $\{\alpha_1, \ldots, \alpha_{n-1}\}$ is given by 
\[H^i = {\small \frac{1}{n} 
\begin{pmatrix}
n-i  &           &     &      &           & \\
      & \ddots &     &       &          & \\
      &           & n-i &      &          &  \\
      &           &      & -i  &           &  \\
      &           &      &     & \ddots & \\
      &           &      &     &           & -i 
\end{pmatrix}}, \] 
where the first $i$ components of the diagonal equal $n-i$ and the latter $n-i$ components equal $-i$. 
Then from the direct calculation we have $\nabla_{H^i}H^j = \frac{1}{n}[(n-j)H^i - i H^j]$.    
Therefore $\s_{\Lambda'}$ is closed under the multiplication $\nabla$. 
\end{proof}

We showed that a parabolic subalgebra $\q_{\Lambda'}$ and a solvable subalgebra $\s_{\Lambda'}$ of $(\sll(n, \R), \nabla)$ and 
$(\sll(n, \bH), \nabla)$ are autoparallel, hence on which projectively flat affine connections $\nabla$ are induced. However 
the induced connections $\nabla$ on $\q_{\Lambda'}$ and $\s_{\Lambda'}$ are not affinely flat. Indeed 
$\q_{\Lambda'} \supset \da = <H^1, \ldots, H^{n-1}>_{\R}$ and 
the normalized Ricci tensor 
$\gamma^{\q_{\Lambda'}}$ of $(\q_{\Lambda'}, \nabla)$ satisfies $\gamma^{\q_{\Lambda'}}(H^i, H^i) = \gamma (H^i, H^i) = -\frac{(n-i)i}{n^2}$ for $1 \leq  i \leq n-1$. Thus $(\q_{\Lambda'}, \nabla)$ is not affinely flat.  
On the other hand $\s_{\Lambda'}$ may not contain the whole space $\da$, but still contains at lease one $H^i$. Then  
$\gamma^{\s_{\Lambda'}}(H^i, H^i) = \gamma (H^i, H^i)$ and thus $(\s_{\Lambda'}, \nabla)$ is also not affinely flat. 
However in the following we prove $(\s_{\Lambda'}, \nabla)$  is projectively equivalent to a flat affine connection. 
For this purpose we introduce invariants. 

Two torsion-free affine connections $\nabla$ and $\nabla'$ on $M$ is said to be projectively equivalent if there exists a 1-from $\lambda$ on $M$ such  that $\nabla_X Y - \nabla'_X Y$ $=$ $\lambda(X) Y + \lambda(Y) X$. 
If both $\nabla$ and $\nabla'$ are left invariant affine connections on a Lie group $L$, then $\lambda$ becomes also left invariant. 
Let $f$ and $f'$ be the linear map $\l \to \sll(n+1, \R)$ induced by $\nabla$ and $\nabla'$ respectively. Then the projective equivalence  relation is interpreted as follows: 
$\nabla$ is projectively equivalent to $\nabla'$ iff there exists $\xi \in {\R^n}^*$ such that 
$f' = 
\begin{pmatrix}
I_m  & 0 \\
-\xi & 1
\end{pmatrix}^{-1} f \ 
\begin{pmatrix}
I_m &  0 \\
-\xi & 1
\end{pmatrix}$. 
Indeed left invariant 1-form $\lambda$ and $\xi$ is related by $\xi = (\lambda(X_1), \ldots, \lambda(X_n))$.  

Let $\nabla$ be a left invariant projectively flat affine connection on $L$ and $f$ a corresponding (P)-homomorphism.  
There is a useful tool called invariants to determine a projective equivalence class of $\nabla$ contains a flat affine connection.  Let $\{X_1, \ldots, X_n\}$ be a basis of $\l$. Then the invariant $\phi_f: \R^{n+1} \to \R$ corresponding to $f$ is defined by  
$\phi_f(v) = \det(f(X_1)v, \ldots, f(X_n)v, v)$. 
Then the projective equivalence class $[\nabla]$ contains affinely flat connection iff 
the invariant $\varphi$ induced by $\nabla$ possesses a real linear factor involving $x_{n+1}$, i.e. 
$\varphi(v) = (a_1 x_1+ a_2 x_2 + \cdots + a_n x_{n} + a_{n+1} x_{n+1}) \psi(v)$ for some $(a_1, \ldots, a_{n+1})^t \in {\R^{n+1}}$ (see \cite{agaoka-kato}).   

Assume $f: \l \to \gl(n+1, \R)$ is a (P)-homomorphism corresponding to $\nabla$ on $\l$.  
Suppose that $\varphi_f(v) = (a_1 x_1+ a_2 x_2 + \cdots + a_n x_{n} + a_{n+1} x_{n+1}) \psi(v)$ 
for some $(a_1, \ldots, a_{n+1})^t \in {\R^{n+1}}$.  
Put $\xi = \frac{1}{a_{n+1}}(a_1, \ldots, a_{n})$,  \ 
$Q = 
\begin{pmatrix}
I_m  & 0 \\
-\xi & 1
\end{pmatrix}$ 
and $f' = Q^{-1} f Q$.  
Then we have 
\begin{eqnarray*}
\varphi_{f'}(v) &=& 
\varphi_f (
Q  v) \\
&=& 
(a_{n+1} x_{n+1})
\psi_f (Q v). 
\end{eqnarray*} 
The invariant $\varphi_{f'}(v)$ possesses a linear factor $x_{n+1}$,  it follows that $f'_1=0$ (cf. \cite{agaoka-kato}). 
Hence   
$f' = Q^{-1} f Q$ 
gives a (P)-homomorphism corresponding to a flat affine connection $\nabla'$, which is projectively equivalent to $\nabla$.  
Now we shall prove the following: 

\begin{prop}
The induced affine connection $\nabla$ on $\s_{\Lambda'}$ is projectively equivalent to a flat affine connection. 
\end{prop}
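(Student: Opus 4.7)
The plan is to apply the invariants criterion recalled just above. Set $m = \dim_{\R} \s_{\Lambda'}$ and let $\tilde f : \s_{\Lambda'} \to \sll(m+1, \R)$ be the (P)-homomorphism corresponding to the induced connection; my goal is to exhibit a real linear factor of $\varphi_{\tilde f}$ involving the coordinate $x_{m+1}$.

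The structural input is that every $X \in \s_{\Lambda'}$ is block upper triangular, with respect to the partition of $\{1, \ldots, n\}$ determined by $\Lambda \setminus \Lambda'$, and each diagonal block is a \emph{real} scalar multiple of the identity. Consequently $V := \s_{\Lambda'} + \R I_n \subset \gl(n, \K)$ is an associative subalgebra with unit $I_n$, and a direct check gives $(XY)_{nn} = X_{nn} Y_{nn}$ for all $X, Y \in V$, with $(\cdot)_{nn}$ a real scalar even when $\K = \bH$. This reality of the bottom-right entry is the key point that lets the argument go through uniformly over both base fields.

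Since $(\sll(n, \K), \nabla)$ is Ricci symmetric, so is the induced connection on $\s_{\Lambda'}$ by the remark following Proposition~\ref{autoparallel}. I may therefore replace $\tilde f$ by its modification $\tilde f'(X) = \begin{pmatrix} \nabla_X & X \\ -\gamma(X, \cdot) & 0 \end{pmatrix}$, which differs from $\tilde f$ only by a scalar matrix depending on $X$ and hence has the same invariant polynomial. Fix a basis $\{X_1, \ldots, X_m\}$ of $\s_{\Lambda'}$ and define
\[
\ell(x) \;:=\; \sum_{i=1}^{m} x_i (X_i)_{nn} + x_{m+1},
\]
which is nothing but $\ell(x) = v_{nn}$ for $v = \sum x_i X_i + x_{m+1} I_n \in V$. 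Writing $Y = \sum x_i X_i$ and using $\nabla_X Y = XY - \tfrac{\Re \tr XY}{n} I_n$, $\gamma(X, Y) = -\tfrac{\Re \tr XY}{n}$, and $(XY)_{nn} = X_{nn} Y_{nn}$, a direct calculation yields
\[
\ell\bigl(\tilde f'(X) x\bigr) \;=\; (XY)_{nn} + x_{m+1} X_{nn} \;=\; X_{nn}\,\ell(x),
\]
so $\ell$ is a common left-eigenvector in the dual for the operators $\tilde f'(X)$, with character $X \mapsto X_{nn}$.

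It now follows formally that $\ell$ divides $\varphi_{\tilde f}$: if $\ell(x) = 0$, then $\ell(\tilde f'(X_i) x) = (X_i)_{nn} \ell(x) = 0$ for every $i$, so the $m+1$ columns $\tilde f'(X_1) x, \ldots, \tilde f'(X_m) x, x$ lie in the $m$-dimensional hyperplane $\ker \ell$, are linearly dependent, and therefore $\varphi_{\tilde f'}(x) = 0$; irreducibility of the linear form $\ell$ forces $\ell \mid \varphi_{\tilde f'} = \varphi_{\tilde f}$. The coefficient of $x_{m+1}$ in $\ell$ equals $(I_n)_{nn} = 1 \ne 0$, so the criterion recalled above produces a flat affine connection on $\s_{\Lambda'}$ projectively equivalent to $\nabla$. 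The step that requires genuine care is the semi-invariance identity $\ell \circ \tilde f'(X) = X_{nn}\, \ell$: the trace-correction in $\nabla_X Y$ must cancel precisely against $-\gamma(X, Y)$, and the factorization $(XY)_{nn} = X_{nn} Y_{nn}$ relies on both the block upper triangular shape of $V$ and the reality of its diagonal blocks.
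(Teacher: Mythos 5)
Your proof is correct, and it in fact produces exactly the paper's linear factor --- since $(H^{i_j})_{nn} = -\tfrac{i_j}{n}$ and $(X)_{nn}=0$ for $X \in \n_{\Lambda'}$, your $\ell$ is $-\tfrac{1}{n}(i_1x_1+\cdots+i_kx_k-nx_{m+1})$ --- but it arrives there by a genuinely different mechanism. The paper singles out the highest root vector $E_{1n} \in \n_{\Lambda'}$, checks that $\nabla_{E_{1n}}$ annihilates $\n_{\Lambda'}$, sends each $H^{i_j}$ to $-\tfrac{i_j}{n}E_{1n}$, and has vanishing $\gamma(E_{1n},\cdot)$ on $\s_{\Lambda'}$, so that the column $f(E_{1n})v$ of the invariant matrix has a single nonzero entry; cofactor expansion along that column then peels off the linear factor from $\varphi_f$. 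You instead exhibit the factor a priori as a degree-one relative invariant: because $\s_{\Lambda'}$ is block upper triangular with real scalar diagonal blocks, the $(n,n)$ entry is a real multiplicative character of $V=\s_{\Lambda'}+\R I_n$, the trace correction in $\nabla_XY$ cancels against $-\gamma(X,Y)$, and the semi-invariance $\ell\circ f'(X)=X_{nn}\,\ell$ forces $\ell \mid \varphi_{f}$ without any determinant computation. (Your identity is really the statement that $f'$ is the left regular representation of $X$ on $V$ and $v\mapsto v_{nn}$ is an algebra character, which is a clean way to see it; it also applies verbatim to any autoparallel subalgebra with this block-triangular shape, and makes the coefficient $(I_n)_{nn}=1$ of $x_{m+1}$ transparent.) What the paper's more computational route buys in exchange is the explicit row vector $\xi=-\tfrac{1}{n}(i_1,\ldots,i_k,0,\ldots,0)$ and the matrix $Q$ implementing the projective change to the flat connection, which you obtain only implicitly through the criterion. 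Both arguments are complete.
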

\begin{proof}
By definition $\s_{\Lambda'} = \da_{\Lambda'} + \n_{\Lambda'}$, where $\da_{\Lambda'} = <H^{i_1}, \ldots, H^{i_k}>_{\R}$ and    
$\n_{\Lambda'} = \sum_{\bigtriangleup^+ - <\Lambda'>^+}$. Hence 
always we have $\n_{\Lambda'} \supset \g_{\alpha_1 + \cdots + \alpha_{n-1}} = \K E_{1n}$. 
According to the definition of $\nabla$ on $\sll(n, \K)$ we have $\nabla_{E_{1n}} H^{i_j} = E_{in} H^{i_j} = \frac{-i_j}{n} E_{1n}$, and 
$\nabla_{E_{1n}} \K E_{kl} = 0$ for $E_{kl} \in \n_{\Lambda'}$.  Denote by $m$ the dimension of $\s_{\Lambda'}$. 
It follows that a Lie algebra representation $f: \s_{\Lambda'} \to \gl(m+1, \R)$ constructed from $\nabla$ on $\s_{\Lambda'}$ 
is of the form 
\[
f(E_{1n}) = 
\begin{pmatrix}
&&&&&&  & 0 \\
&&& \mbox{\smash{\huge 0}} &&& & \vdots \\
&&& &&&  & 0 \\
-\frac{i_1}{n} & -\frac{i_2}{n} & \cdots & -\frac{i_k}{n} & 0 & \cdots & 0 & 1 \\
0 & 0 & \cdots & 0 & 0 & \cdots & 0 & 0 
\end{pmatrix}. \] 
Therefore the invariant $\varphi_f : \R^{m+1} \to \R$ induced from $f$ is calculated as follows: 
\begin{eqnarray*} 
\varphi_f(v) &=& \det (f(H^{i_1})v, f(H^{i_2})v, \ldots, f(H^{i_k})v, \ldots, f(E_{1n})v, v) \\  
&=& 
\det 
\begin{pmatrix}
*  & 0 & x_1 \\ 
\vdots  & \vdots & \vdots \\ 
*  & 0 & x_{m-1} \\ 
*  &   - \frac{i_1}{n}x_1 - \frac{i_2}{n}x_2 - \cdots - \frac{i_k}{n}x_k + x_{m+1} & x_m \\
*  &  0 & x_{m+1} 
\end{pmatrix} \\ 
&=& -\frac{1}{n}(i_1 x_1 + i_2 x_2 + \cdots + i_k x_k - n x_{m+1}) \psi (v). 
\end{eqnarray*}   
Denote by $\xi$ a row vector $-\frac{1}{n}(i_1, i_2, \ldots, i_k, 0, \ldots, 0)$ of the length $m$.  
Put ${}^tv := (x_1,  \cdots, x_m, x_{m+1})$ and $Q := 
\begin{pmatrix}
I_m  & 0 \\
-\xi & 1
\end{pmatrix}$. 
Then 
\[
Q v 
= 
\begin{pmatrix}
x_1  \\ 
\vdots \\ 
x_m \\ 
\frac{i_1}{n}x_1 + \frac{i_2}{n}x_2 \cdots + \frac{i_k}{n}x_k + x_{m+1} 
\end{pmatrix}. \] 
Thus we have 
\[
\varphi_f \left( 
Q v \right) = 
x_{m+1} \cdot \psi \left(
Q v\right). 
\] 
Since $\varphi_{Q^{-1} \bar{f}Q}(v) = \varphi_{Q^{-1} fQ}(v) = \varphi_f(Q v)$, 
the (P)-homomorphism 
$Q^{-1} \bar{f}Q$ 
of $\s_{\Lambda'}$ is corresponding to a flat affine connection.  
This proves the proposition.   
\end{proof}

\section{Affinely flat parabolic subgroups} 
Let us recall that the set $\Lambda = \{\alpha_1, \ldots, \alpha_{n-1}\}$ gives a set of simple roots of $\bigtriangleup (\sll(n, \R), \da)$. 
Let  $\Lambda' = \{\alpha_{i_1}, \alpha_{i_2}, \ldots, \alpha_{i_m}\}$ be a proper subset of $\Lambda$. 
Without loss of generality we can assume $i_1 < i_2 < \cdots < i_{m}$. 
To begin with we compute the induced affine connection $\nabla$ on $\q_{\Lambda'}$.  Recall that the Ricci symmetric  connection $\nabla$ on $\sll(n, \R)$ is given by $\nabla_X Y = X Y -\frac{\tr XY}{n}I_n$ for $X, Y \in \sll(n, \R)$. 
The straightforward computation yields the following:  
 
\begin{eqnarray*}
\nabla_{H^k}H^l &=&  \frac{1}{n}[(n-l)H^k - k H^l] \ \mbox{for} \ k \leq l  \\  
\nabla_{H^k} E_{i j} &=&   
\left\{ 
\begin{array}{l}
\frac{n-k}{n}E_{i j} \ \mbox{for} \ i \leq k \\[2mm]
\frac{-k}{n}E_{i j} \ \mbox{for} \ i \geq k+1
\end{array}\right. \\
\nabla_{E_{i j}}H^k &=& 
\left\{
\begin{array}{l}
\frac{n-k}{n} E_{i j} \ \mbox{for} \ j \leq k \\[2mm]
\frac{-k}{n} E_{i j} \ \mbox{for} \ j \geq k+1 
\end{array}\right.  \\
\nabla_{E_{i j}}E_{k l} &=& \delta_{j k} E_{i l} \ \mbox{for} \ i \neq l  \\
\nabla_{E_{i j}}E_{j i} &=& - H^{i-1} + H^i  \ (H^0 = H^n = 0)\\ 
\gamma(H^i, H^j) &=& -\frac{i(n-j)}{n^2} \ (i \leq j) \\ 
\gamma(H^k, E_{i j}) &=& 0  \\ 
\gamma(E_{i j}, E_{k l}) &=& - \delta_{j k} \delta_{i l} \frac{1}{n}.  
\end{eqnarray*}
By using these data we can prove the following:  
\begin{prop}\label{main prop1}  
Assume that $i_1 = 1$,  $i_m = n-1$ and  
$|i_r - i_{r+1}| \leq 2$ for $1 \leq r \leq m - 1$. 
Then the induced affine connection $\nabla$ on $\q_{\Lambda'}$ is not projectively equivalent to any flat affine connection. 
\end{prop}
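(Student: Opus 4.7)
The plan is to use the invariant criterion from Section~2: the induced connection $\nabla$ on $\q_{\Lambda'}$ is projectively equivalent to a flat affine connection if and only if there exists a row vector $\xi$ of length $m = \dim \q_{\Lambda'}$ such that, after conjugation by $Q = \begin{pmatrix} I_m & 0 \\ -\xi & 1\end{pmatrix}$, the conjugated (P)-homomorphism $f' = Q^{-1} f Q$ has vanishing $f'_1$-component. Writing $\xi = (\lambda(X_1), \ldots, \lambda(X_m))$ for a left-invariant $1$-form $\lambda$ on $\q_{\Lambda'}$ and expanding $Q^{-1} f Q$ using the Ricci-symmetric form $(\ref{form of rep})$ of $f$, this condition translates into the single functional equation
\[ \gamma(X, Y) = \lambda(\nabla_X Y) - \lambda(X)\lambda(Y) \quad \text{for all } X, Y \in \q_{\Lambda'}. \]
My goal is to rule out any solution $\lambda$ under the stated hypotheses.

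First, since $\da \subset \q_{\Lambda'}$, I plug in $X = Y = H^i$ for $1 \leq i \leq n-1$; using $\gamma(H^i, H^i) = -i(n-i)/n^2$ and $\nabla_{H^i} H^i = \tfrac{n-2i}{n} H^i$ produces a quadratic in $\lambda(H^i)$ whose only real roots are $\lambda(H^i) = (n-i)/n$ and $\lambda(H^i) = -i/n$. Testing $(X, Y) = (H^k, H^l)$ for $k < l$ then rules out precisely the combination $\lambda(H^k) = (n-k)/n$ together with $\lambda(H^l) = -l/n$, so $\lambda|_{\da}$ must follow a cutoff pattern: there exists $i_0 \in \{0, 1, \ldots, n-1\}$ with $\lambda(H^i) = -i/n$ for $i \leq i_0$ and $\lambda(H^i) = (n-i)/n$ for $i > i_0$.

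Second, I exploit the negative root vectors available in $\q_{\Lambda'}$. For each $\alpha_i \in \Lambda'$ both $E_{i, i+1}$ and $E_{i+1, i}$ lie in $\q_{\Lambda'}$, so I can apply the functional equation to both orderings $(E_{i, i+1}, E_{i+1, i})$ and $(E_{i+1, i}, E_{i, i+1})$. Subtracting the two resulting identities and using $\gamma(E_{i, i+1}, E_{i+1, i}) = -1/n$, $\nabla_{E_{i, i+1}} E_{i+1, i} = -H^{i-1} + H^i$, $\nabla_{E_{i+1, i}} E_{i, i+1} = -H^i + H^{i+1}$ with the convention $H^0 = H^n = 0$ yields the arithmetic-progression constraint
\[ \lambda(H^{i-1}) + \lambda(H^{i+1}) = 2 \lambda(H^i) \]
for every index $i$ with $\alpha_i \in \Lambda'$.

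Finally, I match this AP constraint against the cutoff. A direct substitution shows that it fails at $i = 1$ when $i_0 = 0$, at $i = n-1$ when $i_0 = n-1$, and at both $i = i_0$ and $i = i_0 + 1$ whenever $1 \leq i_0 \leq n-2$. The assumptions $i_1 = 1$ and $i_m = n-1$ force $\alpha_1, \alpha_{n-1} \in \Lambda'$, eliminating the two pure cases; for any mixed $1 \leq i_0 \leq n-2$, the gap condition $|i_r - i_{r+1}| \leq 2$ prevents two consecutive simple roots from being simultaneously absent from $\Lambda'$, so at least one of $\alpha_{i_0}, \alpha_{i_0+1}$ belongs to $\Lambda'$ and supplies the contradiction. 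Hence no $\lambda$ satisfies the functional equation, and $(\q_{\Lambda'}, \nabla)$ is not projectively equivalent to any flat affine connection. The main obstacle I foresee is the sign bookkeeping in translating $Q^{-1} f Q$ into the equation on $\lambda$ and keeping the direction of the cutoff straight; once those are pinned down the case analysis around $i_0$ and $i_0+1$ is mechanical.
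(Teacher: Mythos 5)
Your proof is correct, and it reaches the contradiction by a genuinely different route than the paper, even though both arguments live inside the same framework (conjugating the representation by $Q=\begin{pmatrix} I_m & 0\\ -\xi & 1\end{pmatrix}$ and demanding that the $\g_1$-component vanish; your functional equation $\gamma(X,Y)=\lambda(\nabla_XY)-\lambda(X)\lambda(Y)$ is exactly that vanishing condition written invariantly, and the sign bookkeeping you worried about does come out right). The paper never touches the Cartan directions: for each $\alpha_i\in\Lambda'$ it reads off from the pair $E_{i,i+1},E_{i+1,i}$ the conditions $\zeta_{i,i+1}=\zeta_{i+1,i}=0$ together with the \emph{first}-difference equations $-n\xi_{i-1}+n\xi_i+1=0$ and $-n\xi_i+n\xi_{i+1}+1=0$ (with $\xi_0=\xi_n=0$); the hypotheses $i_1=1$, $i_m=n-1$, $|i_r-i_{r+1}|\le 2$ guarantee that all $n$ equations of the chain are present, and $n$ equations in the $n-1$ unknowns $\xi_1,\dots,\xi_{n-1}$ are inconsistent. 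You instead extract from the root vectors only the weaker antisymmetrized \emph{second}-difference (arithmetic-progression) constraints, and compensate with the quadratic constraints coming from $X=Y=H^i$ and the mixed pairs $(H^k,H^l)$, which force the two-branch cutoff profile $\lambda(H^i)\in\{-i/n,(n-i)/n\}$. What the paper's route buys is brevity once the matrix entries of $f$ are tabulated; what yours buys is that the subtraction trick dispenses with proving $\zeta=0$, and more interestingly your cutoff profile for $\lambda|_{\da}$ is precisely the vector $\Xi$ ($\xi_r=-r/n$ up to a threshold, $\xi_r=(n-r)/n$ after it) that the paper constructs by hand in the converse direction (Proposition \ref{main prop2}) — so your argument simultaneously explains why that construction must take the shape it does. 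I verified the individual computations: the discriminant of your quadratic in $\lambda(H^i)$ is $1$, giving exactly the two stated roots; only the combination $\lambda(H^k)=(n-k)/n$, $\lambda(H^l)=-l/n$ ($k<l$) is inconsistent with the $(H^k,H^l)$ equation; and the second difference of the cutoff sequence equals $\pm 1$ at the break points you list, so the case analysis closes. The only place where you are slightly terse is the final combinatorial step: ruling out two consecutive absent indices $i_0,i_0+1$ needs $i_1=1$ and $i_m=n-1$ in addition to the gap condition (so that $\{i_0,i_0+1\}$ is sandwiched between present indices), but you do have all three hypotheses in hand and the argument is sound.
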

\begin{proof} 
From assumption we can choose a basis of $\q_{\Lambda}$ as 
\[\q_{\Lambda} = <H^1, H^2, \ldots, H^{n-1} \mid E_{12}, E_{21}, \ldots, E_{i_r i_{r+1}}, E_{i_{r+1} i_r}, \ldots, E_{n-1 n}, E_{n n-1} 
\mid \ldots >. \] 
The first part is the basis of $\g_0 = \da$, the second part is the basis of 
$\sum_{\alpha \in \Lambda'} \g_\alpha \oplus \g_{-\alpha}$, and the third part is the remaining basis. 
Let $f: \q_{\Lambda'} \to \gl(\dim \q_{\Lambda'} +1, \R) $ be a representation of the form \ref{form of rep} corresponding to $\nabla$. 
Let us describe $f$ by matrices with respect to the decomposition 
$\q_{\Lambda'} \oplus <\R> = \g_0 + \sum_{\beta \in \bigtriangleup^+ \cup <\Lambda'>}\g_\beta \oplus <\R>$. 
\begin{eqnarray*} 
 && \ f(H^k)  \\
&=& \hspace{-3mm} \frac{1}{n} 
\left({\arraycolsep=0.1mm 
\begin{array}{ccccccccc|ccc|c} 
n-k&&&&&&&&    &&  &  & 0 \\
&n-k&&&&&&&    &&  &  &  0 \\
&&\ddots&&&&&&    &&  &  & \vdots \\ 
&&& n-k &&&&&    &&  &  & 0 \\
-1&-2&\cdots &-(k-1) &n-2k &n-(k+1) &n-(k+2) &\cdots & 1   && \mbox{\smash{\huge 0}} &  & n \\
&&&&&-k &&&    &&  &  & 0 \\
&&&&&&-k &&    &&  &  & 0 \\
&&&&&&&\ddots &    &&  &  & \vdots \\
&&&&&&&& \! \! -k    &&  &  & 0 \\
\hline
&&&&&&&&    &&  &  &  \\
&&&&\mbox{\smash{\huge 0}} &&&&    && \mbox{\smash{\huge $\ast$}} &  & \mbox{\smash{\huge 0}}  \\
&&&&&&&&    &&  &  &  \\
\hline 
\frac{1}{n}(n-k) &\frac{2}{n}(n-k) &\cdots &\frac{k-1}{n}(n-k) &\frac{k}{n}(n-k) &\frac{k}{n}\{n-(k+1)\}&\frac{k}{n}\{n-(k+2)\}&\cdots &\frac{k}{n}    & &0  &  &0  \\
\end{array}}
\right). \hspace{8mm}
\end{eqnarray*}

\begin{eqnarray*}
f(E_{i j}) = 
\frac{1}{n}
\left(
\begin{array}{c|ccccccc|c}
& &&& 0 &&& &    \\
& &&& \vdots &&& &    \\
& &&& 0 &&& &    \\
& &&& \! \! -n &&& &    \\
\mbox{\smash{\huge 0}} &    &&& n &&& &  0 \\    
& &&& 0 &&& &    \\
& &&& \vdots &&& &    \\
& &&& 0 &&& &    \\
\hline
&   &&& 0 &&& & 0 \\
&   &&& \vdots &&& & \vdots \\
&   &&& 0 &&& & 0 \\
\mbox{\smash{\huge $\ast$}} & & \mbox{\smash{\huge $\ast$}}  &&  0 && \mbox{\smash{\huge $\ast$}} & & n \\  
&   &&& 0  &&& & 0 \\
&   &&& \vdots &&& & \vdots \\
&   &&& 0 &&& & 0 \\
\hline
0 &  0& \cdots & 0 & 1 & 0 & \cdots & 0 & 0 
\end{array}
\right), \ 
f(E_{k l}) = 
\frac{1}{n}
\left(
\begin{array}{ccc|ccc|c}
&&& && &    \\
&\mbox{\smash{\huge 0}}&& &\mbox{\smash{\huge 0}}& & 0  \\
&&& && &    \\
\hline
&&& && & 0  \\
&&& && & \vdots  \\
&&& && &  0 \\
&\mbox{\smash{\huge $\ast$}}&& &\mbox{\smash{\huge $\ast$}}& & n   \\
&&& && & 0  \\
&&& && & \vdots  \\
&&& && &  0 \\
\hline
&0&& &0& & 0   \\ 
\end{array}
\right)
\end{eqnarray*}

In the above $E_{i j}$ is belonging to $\sum_{\beta \in <\Lambda'>}\g_\beta$ and $E_{k l}$ is not belonging to 
$\sum_{\beta \in <\Lambda'>}\g_\beta$. Furthermore $f(E_{i j})$ has the 9 block decomposition, and the $(1, 2)$-part of 
$f(E_{i j})$ is expressing $\nabla_{E_{i j}}E_{j i} = -H^{i-1} + H^i$ and the $(2, 3)$-part is corresponding to $E_{i j}$. 
The $(3, 2)$-part is expressing $-\gamma(E_{i j}, E_{j i}) =  \frac{1}{n}$.  Note that $f(E_{i j})E_{i j} = 0$. 
    
Denote by $\Xi$ a row vector 
\[(\xi_1 \ \xi_2 \ \cdots \ \xi_{n-1} \mid \zeta_{12} \ \zeta_{21} \cdots \zeta_{i_r i_{r+1}} \ \zeta_{i_{r+1} i_r} \cdots  
\xi_{n-1 n} \ \xi_{n n-1} \mid \cdots )\]  
of the length $m = \dim \q_{\Lambda'}$, whose variables are corresponding to the above basis.  
Denote by $P$ a $m+1 \times m+1$ matrix    
$
\begin{pmatrix} 
I_m & 0 \\
- \Xi & 1 
\end{pmatrix}. 
$ 
The $\g_1$ component of $P^{-1} f(E_{12}) P$ is equal to 
$(\cdots \mid 0 \ n\xi_1+1 \ \cdots ) - 
n\zeta_{12}(\xi_1 \ \xi_2 \ \cdots \ \xi_{n-1} \mid \zeta_{12} \ \zeta_{21} \cdots )$.  
Thus if a representation $P^{-1} f P$ is corresponding to a flat affine connection, then we have  
$\zeta_{12} = 0$ and $\xi_1 = -\frac{1}{n}$.  
Likewise from the computation of $P^{-1} f(E_{21}) P$ we obtain $\zeta_{21} = 0$ and $-n \xi_1 + n \xi_2 + 1 = 0$ as the 
necessary  condition of $P^{-1} f P$ being affine flat.

Generally 
the $\g_1$ component of $P^{-1} f(E_{i_r i_{r+1}}) P$ is equal to 
{\small 
\begin{eqnarray*} 
&&  (\cdots \mid \cdots 0  -n \xi_{i_r-1} + n\xi_{i_r} + 1 \cdots ) \\  
&& - \ n\zeta_{i_r i_{r+1}}(\xi_1 \ \xi_2  \cdots  \xi_{n-1} \mid \cdots  \zeta_{i_r i_{r+1}} \  \zeta_{i_{r+1} i_r} \cdots ). 
\end{eqnarray*}
}
Thus we obtain  $\zeta_{i_r i_{r+1}} = 0$ and $- n\xi_{i_r-1}+n\xi_{i_r}+1 = 0$. 
From the computation of $P^{-1} f(E_{i_{r+1} i_r}) P$ we obtain  
$\zeta_{i_{r+1} i_r} = 0$ and $- n\xi_{i_r}+n\xi_{i_r+1}+1 = 0$.  
Combining these these equations yields the following:  
\[
\begin{array}{l}
\ \  n\xi_1 +1 = 0 \\ 
-n \xi_1 + n \xi_2 + 1=0 \\ 
-n \xi_2 + n \xi_3 + 1=0  \\ 
\quad \quad \quad \vdots  \\ 
-n \xi_{n-2} + n \xi_{n-1} +1=0  \\ 
-n \xi_{n-1} +1 = 0. 
\end{array} 
\]
These equations have no common solutions.  
Indeed the first $(n-1)$ equations have the common solution 
$\xi_1 = -\frac{1}{n}, \xi_2 = -\frac{2}{n}, \ldots, \xi_{n-1} = -\frac{n-1}{n}$, 
however which contradicts the last equation.   

It follows that the induced affine connection $\nabla$ on $\q_{\Lambda'}$ is not projectively equivalent to any flat affine connection.  
\end{proof}
The converse of Proposition \ref{main prop1} is also true. 
\begin{prop}\label{main prop2} 
Assume that $\Lambda' \varsubsetneq \Lambda$ does not satisfy $i_1 = 1$,  $i_m = n-1$ and 
$|i_r - i_{r+1}| \leq 2$ for $1 \leq r \leq m - 1$. 
Then the induced affine connection $\nabla$ on $\q_{\Lambda'}$ is projectively equivalent to a flat affine connection. 
\end{prop}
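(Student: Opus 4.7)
The plan is to produce an explicit linear functional $\xi$ on $\q_{\Lambda'}$ so that, setting $Q = \begin{pmatrix} I & 0 \\ -\xi & 1 \end{pmatrix}$, the conjugate $Q^{-1}fQ$ of the (P)-homomorphism $f$ of $(\q_{\Lambda'}, \nabla)$ has vanishing $\g_1$-part. By the criterion developed in the preceding discussion, this produces a (P)-homomorphism whose $\g_1$-component is zero and hence a flat affine connection projectively equivalent to $\nabla$.

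First I would unpack the vanishing of $(f')_1$ for $f':= Q^{-1}fQ$: a direct block computation shows the condition is
\[
\gamma(X,Y) = \xi(\nabla_X Y) - \xi(X)\xi(Y) \quad \text{for all } X, Y \in \q_{\Lambda'}.
\]
Antisymmetrizing in $X, Y$ and invoking torsion-freeness of $\nabla$ together with the symmetry of $\gamma$ forces the necessary condition $\xi([\q_{\Lambda'},\q_{\Lambda'}]) = 0$. Since $[H^{\min(j,k)}, E_{jk}]$ is a nonzero scalar multiple of $E_{jk}$, every root vector in $\q_{\Lambda'}$ lies in the derived subalgebra, so $\xi$ must vanish on all root vectors. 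With $\xi$ zero on roots, one checks from the table that the equations $\gamma(H^k, E_{ij}) = 0$ and the non-partner instances of $\gamma(E_{ij}, E_{kl}) = 0$ are automatic, reducing the problem to determining $u_j := \xi(H^j)$ on $\da$.

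The key structural step is that the diagonal equations $\gamma(H^i, H^j) = \xi(\nabla_{H^i}H^j) - u_i u_j$ factor as
\[
(u_i + i/n)\bigl(u_j - (n-j)/n\bigr) = 0 \quad \text{for } i \leq j,
\]
whose general solution forms a one-parameter family indexed by a threshold $k_0 \in \{0, 1, \ldots, n-1\}$: take $u_j = -j/n$ for $j \leq k_0$ and $u_j = (n-j)/n$ for $j > k_0$. The only remaining constraints come from the ``partner'' equations $\gamma(E_{pq}, E_{qp}) = -1/n$, valid precisely when $\alpha_p, \ldots, \alpha_{q-1} \in \Lambda'$; via $\nabla_{E_{pq}}E_{qp} = -H^{p-1}+H^p$ they become $u_p - u_{p-1} = u_q - u_{q-1} = -1/n$, with the conventions $u_0 = u_n = 0$. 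A direct check in the $k_0$-family shows $u_j - u_{j-1} = -1/n$ for every $j \neq k_0 + 1$, whereas $u_{k_0+1} - u_{k_0} = (n-1)/n$. Hence the full system is consistent exactly when $k_0$ can be chosen so that neither $\alpha_{k_0}$ nor $\alpha_{k_0+1}$ belongs to $\Lambda'$ (with $\alpha_0$ and $\alpha_n$ treated as absent).

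Finally, under the hypothesis at least one of $i_1 = 1$, $i_m = n-1$, $|i_r - i_{r+1}| \leq 2$ fails, and in each case a suitable $k_0$ is immediate: if $i_1 \geq 2$ take $k_0 = 0$; if $i_m \leq n-2$ take $k_0 = n-1$; if some $r$ satisfies $i_{r+1} - i_r \geq 3$ take $k_0 = i_r + 1$, so that $\alpha_{k_0} = \alpha_{i_r+1}$ and $\alpha_{k_0+1} = \alpha_{i_r+2}$ both lie strictly between $i_r$ and $i_{r+1}$ and hence outside $\Lambda'$. The main obstacle I anticipate is the clean factorization of the $(H^i, H^j)$ equations and the identification of the $k_0$-family as the complete solution set; once that structural step is secured, the combinatorial matching of the transition index $k_0 + 1$ to a gap in $\Lambda'$ becomes routine.
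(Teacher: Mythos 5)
Your argument is correct, and it reaches the conclusion by a genuinely different route from the paper. The paper fixes essentially the same covector (its $\Xi$ with threshold $i_l$ is the member $k_0=i_l-2$ of your one-parameter family), but verifies that it works by computing the determinant invariant $\varphi_f(v)=\det f_v$ and exhibiting, via a row operation, the linear factor $z$; it then invokes the criterion that a linear factor involving the last coordinate forces $f'_1=0$. You instead work directly with the algebraic condition $\gamma(X,Y)=\xi(\nabla_XY)-\xi(X)\xi(Y)$ for the $\g_1$-part of $Q^{-1}fQ$ to vanish, reduce it by the derived-algebra observation to the functional on $\da$, and solve the resulting quadratic system completely: the factorization $(u_i+i/n)(u_j-(n-j)/n)=0$ and the classification of its solutions as the threshold family are both correct (I checked them against the paper's tables for $\nabla_{H^i}H^j$, $\gamma(H^i,H^j)$ and $\nabla_{E_{pq}}E_{qp}=-H^{p-1}+H^p$), and the matching of the transition index $k_0+1$ to a gap in $\Lambda'$ handles all three failure modes of the hypothesis, including the vacuous case $\Lambda'=\emptyset$. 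What your approach buys is that it is an if-and-only-if analysis: the same computation shows that no admissible $k_0$ exists precisely when $i_1=1$, $i_m=n-1$ and all gaps are $\leq 2$, so it simultaneously yields Proposition \ref{main prop1} and hence all of Theorem \ref{main thm 1} in one pass, whereas the paper needs a separate contradiction argument there. What the paper's route buys is consistency with its invariant-theoretic framework (linear factors of $\varphi_f$) and avoidance of having to classify all solutions of a quadratic system. One small point worth making explicit in a final write-up: the partner equations for a long root $\lambda_p-\lambda_q\in\langle\Lambda'\rangle$ are subsumed by those for the simple roots $\alpha_p,\alpha_{q-1}\in\Lambda'$, so the constraint set really is indexed by $\Lambda'$ itself, as your gap criterion tacitly assumes.
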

\begin{proof} 
Let us choose the basis of $\q_{\Lambda'} \oplus \R$ as 
\[\q_{\Lambda'} \oplus <w> = \{H^1, H^2, \ldots, H^{n-1} \mid E_{12}, E_{21}, \ldots, E_{i_r i_{r+1}}, E_{i_{r+1} i_r}, \ldots, E_{n-1 n}, E_{n n-1} 
\mid \ldots \mid w \}.\] 
With respect to this basis we express an element $v$ of $\R^{\dim \q_{\Lambda'} + 1}$ as 
$(a_1, \ldots, a_{n-1} \mid b_{i j} \mid z)$.  
By using this basis we define the matrix $f_{v}$ to be    
\begin{eqnarray*} 
(f(H^1)v, f(H^2)v, \cdots, f(H^{n-1})v, \mid f(E_{1 2})v, f(E_{2 1})v, \ldots, f(E_{n-1 n})v,  f(E_{n n-1})v  \mid \cdots \mid v). 
\end{eqnarray*}  
Denote by $\varphi_f$ the invariant induced by the representation $f$.  
Then we have $\varphi_f(v)  =   \det f_v$.  
The column vector of $f_v$ corresponding to $H^k$ is given by
\begin{eqnarray*} 
\frac{1}{n}\left(
\begin{array}{c} 
  (n-k)a_1  \\   
   (n-k)a_2  \\ 
  (n-k)a_3  \\ 
   \vdots   \\  
   (n-k)a_{k-1}        \\
  - a_1-2 a_2 \cdots - (k-1) a_{k-1} + (n-2k) a_k + \{n-(k+1)\}a_{k+1} + \cdots +a_{n-1} + n z    \\ 
  -k a_{k+1}   \\
  \vdots   \\
  -k a_{n-2}   \\ 
  -k a_{n-1} \\ 
\hline
  *  \\
  *  \\
  *   \\ 
\hline 
 \frac{1}{n}(n-k)a_1 + \cdots + 
\frac{k-1}{n}(n-k) a_{k-1} + \frac{k}{n}(n-k) a_{k} + \frac{k}{n}\{n-(k+1)\} a_{k+1} + \cdots + \frac{k}{n} a_{n-1}  \\ 
\end{array}
\right). 
\end{eqnarray*}
The column vectors of $f_v$ corresponding to $E_{i j}$ is given by  
\[
\frac{1}{n}\left( 
\begin{array}{c} 
 0    \\   
 \vdots  \\ 
 0    \\  
  -n b_{j i}  H^{i-1}      \\ 
   n b_{j i}  H^i  \\ 
0     \\ 
 \vdots  \\ 
 0  \\ 
\hline 
 \\
\mbox{\smash{\huge $\ast$}}   \\ 
 \\
\hline 
 b_{j i} 
\end{array}
\right). 
\] 
Note that $H^0 = H^{n} = 0$. For example $E_{n n-1}$-th column of $f_v$ is 
${}^t(0, \ldots, 0 -n b_{n-1 n} \mid \ast \mid b_{n-1 n})$. 
This case appears only when $i_m = n-1$.  
 
We set $i_l = n+1$ if $i_m \neq n-1$,  and put $\xi_r = -\frac{r}{n}$ for $1 \leq r \leq n-1$. 
If $i_m = n-1$, then define  
$i_l$ to be the minimum number of the set $\{i_1, \cdots, i_{j}\}$ such that we have $|i_s - i_{s+1}| \leq 2$ for any $i_s \geq i_l$. 
Set $\xi_r = -\frac{r}{n}$ for $1 \leq  r \leq i_l - 2$, 
and $\xi_r = \frac{n-r}{n}$ for $r \geq i_l - 1$. 
Denote by $\Xi$ the row vector 
$(\xi_1 \ \cdots \ \xi_{n-1} \ 0 \cdots 0)$ of the length $\dim \q_{\Lambda'}$.  
Denote by $P$ the matrix 
$
\begin{pmatrix} 
I_m  &  0 \\
-\Xi & 1
\end{pmatrix}
$
 and by $f'$ the representation 
$P^{-1} f P$.  
Put $z' = - \xi_1 a_1 + \cdots - \xi_{n-1} a_{n-1} + z$. 
Since we have $\varphi_{f'}(v) = \varphi_{f}(P v)$,  
the invariant $\varphi_{f'}(v)$ is obtained by replacing $z'$ with $z$ appearing in $\varphi_f(v)$. 

Let us take vectors $X$, $Y$ from the basis of $\q_{\Lambda'} \oplus <w>$.  
Denote by $\varphi(X, Y)$ the $(X, Y)$ component of the matrix $f_{P v}$.   
Likewise denote by $\varphi(X)$ the row vector of the matrix corresponding to $X$.  
Then even if we replace the last row $\varphi(w)$ of the matrix $f_{P v}$ with the row 
$\xi_1 \varphi(H^1) + \cdots + \xi_{n-1} \varphi(H^{n-1}) + \varphi(w)$, the determinant does not change.  
Now we compute the row vector $\xi_1 \varphi(H^1) + \cdots + \xi_{n-1} \varphi(H^{n-1}) + \varphi(w)$. 
By definition of $\Xi$ we have 
$\xi_1 \varphi(H^1, E_{i j}) + \cdots + \xi_{n-1} \varphi(H^{n-1}, E_{i j}) + \varphi(w, E_{i j}) = 0$ for arbitrary vector $E_{i j} 
\in \q_{\Lambda'}$.  
From the definition of the matrix $P$ and the value of $\xi$ we have 
$\xi_1 \varphi(H^1, w) + \cdots + \xi_{n-1} \varphi(H^{n-1}, w) + \varphi(w, w) = z$. 

Finally we compute the $k$-th column $f(H^k) P v$.   
Suppose $k \leq i_l - 2$.  
Then $\varphi(H^{k}, H^k) = (n-k)a_k + \sum_{k+1\leq  r \leq i_l -2}n a_r + n z$. 
Hence we have  
\begin{eqnarray*} 
&& \xi_1 \varphi(H^1, H^k) + \cdots + \xi_{n-1} \varphi(H^{n-1}, H^k)   \\ 
&=& \hspace{-3mm} \sum_{r' \leq k-1} - \frac{r'}{n}(n-k) a_{r'} 
 - \frac{k}{n}[(n-k)a_k + \! \! \sum_{k+1\leq  r \leq i_l -2}\! \! \! n a_r + n z] \\
&&{} + \sum_{k+1 \leq r \leq i_l-2} \frac{r}{n} k a_r + \sum_{i_l -1 \leq  s} -\frac{n-s}{n} k a_s \hspace{1cm} \\
&=& - \varphi(w, H^k) -k z.  \hspace{15mm}
\end{eqnarray*}

Now suppose $k \geq i_l - 1$. 
Then $\varphi(H^{k}, H^k) = \sum_{i_l-1 \leq  r' \leq k-1} -n a_{r'} + (-k) a_k + n z$. 
Hence we have 
\begin{eqnarray*}
&& \xi_1 \varphi(H^1, H^k) + \cdots + \xi_{n-1} \varphi(H^{n-1}, H^k)  \hspace{3cm}  \\
&=& \sum_{r \leq i_l -2} -\frac{r}{n}(n-k) a_r + \sum_{i_l-1 \leq  r' \leq k-1} \frac{n-r'}{n} (n-k)a_{r'} \\
&& {} + \frac{n-k}{n}[\sum_{i_l-1 \leq  r' \leq k-1} -n a_{r'} + (-k) a_k + n z]  +  \sum_{s \geq k+1} -\frac{n-s}{n}k a_s  \\
&=& -  \varphi(w, H^k) + (n-k) z. 
\end{eqnarray*}

It follows that any component of row vector 
$\xi_1 \varphi(H^1) + \cdots + \xi_{n-1} \varphi(H^{n-1}) + \varphi(w)$ of the matrix $f_{P v}$ 
has only $0$ or $z$ multiplied by a scalar. 
Therefore the invariant $\varphi_{f'}(v)$ possesses a linear factor $z$, which implies that affine connection $\nabla$ 
on $\q_{\Lambda'}$ is projectively equivalent to a flat affine connection.       
\end{proof}
Combining Propositions \ref{main prop1} and \ref{main prop2} we obtain the following: 
Let  $\Lambda' = \{\alpha_{i_1}, \alpha_{i_2}, \ldots, \alpha_{i_j}\}$ be a proper subset of $\Lambda$. Assume that  $i_1 < i_2 < \cdots < i_{j}$.  
\begin{thm}\label{main thm 1}
The induced affine connection $\nabla$ on $\q_{\Lambda'}$ is not projectively equivalent to any flat affine connection 
iff  we have $i_1 = 1$,  $i_j = n-1$ and $|i_r - i_{r+1}| \leq 2$ for $1 \leq r \leq j - 1$.  
\end{thm}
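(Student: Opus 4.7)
The plan is essentially to observe that this theorem is the biconditional assembled from the two preceding propositions, each of which handles one direction. I would simply argue that Proposition \ref{main prop1} supplies the implication that the three conditions $i_1=1$, $i_j=n-1$, $|i_r-i_{r+1}|\leq 2$ together force $\nabla$ on $\q_{\Lambda'}$ to fail to be projectively equivalent to any flat affine connection, while Proposition \ref{main prop2} supplies the contrapositive of the other direction: as soon as one of the three conditions fails, an explicit choice of gauge $\Xi$ (the row vector built from $\xi_r=-r/n$ or $(n-r)/n$ depending on position relative to the index $i_l$) exhibits a flat representative in the projective class.

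Concretely, I would write: by Proposition \ref{main prop1}, the hypotheses $i_1=1$, $i_j=n-1$, and $|i_r-i_{r+1}|\leq 2$ imply that $\nabla$ on $\q_{\Lambda'}$ admits no projectively equivalent flat affine connection, which gives the $(\Leftarrow)$ direction. For the $(\Rightarrow)$ direction, I would take the contrapositive and invoke Proposition \ref{main prop2}: if $\Lambda'$ fails any one of the three conditions, then $\nabla$ on $\q_{\Lambda'}$ is projectively equivalent to a flat affine connection, so it cannot lie outside the class of connections equivalent to a flat one. Combining the two directions yields the biconditional stated in the theorem.

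Since the substantive work has already been done in the two propositions, there is no genuine obstacle here. The only point worth flagging is that Proposition \ref{main prop1} and Proposition \ref{main prop2} are exact logical complements: the hypothesis of the second is literally the negation of the hypothesis of the first (both assume $\Lambda'\varsubsetneq\Lambda$ with the index ordering $i_1<\cdots<i_j$), so the two propositions partition the set of proper subsets $\Lambda'\subset\Lambda$ into those producing a non-flattenable connection and those producing a flattenable one, with no overlap or gap. This observation is what makes the combination a proof, and it is the single line that needs to be checked before concluding.
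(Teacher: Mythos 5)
Your proposal is correct and is exactly the paper's argument: the paper gives no separate proof of Theorem \ref{main thm 1} beyond the sentence ``Combining Propositions \ref{main prop1} and \ref{main prop2} we obtain the following,'' which is precisely the assembly of the two propositions (one direction directly, the other by contrapositive) that you describe.
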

To illustrate Theorem \ref{main thm 1} we consider $\sll(6, \R)$. Then  $\Lambda = \{\alpha_1, \ldots, \alpha_5\}$ can be expressed by the dynkin diagram  
\[
\xygraph{
\bullet ([]!{+(0,-.3)} {\alpha_1}) - [r]
\bullet ([]!{+(0,-.3)} {\alpha_2}) - [r]
\bullet ([]!{+(0,-.3)} {\alpha_3}) - [r]
\bullet ([]!{+(0,-.3)} {\alpha_4}) - [r]
\bullet ([]!{+(0,-.3)} {\alpha_5})}. 
\] 
All parabolic subalgebras of $\sll(6, \R)$ appearing in the theorem is exhausted by the following: 
\begin{eqnarray*}
\xygraph{
\bullet ([]!{+(0,-.3)} ) - [r]
\circ ([]!{+(0,-.3)} ) - [r]
\bullet ([]!{+(0,-.3)} ) - [r]
\circ ([]!{+(0,-.3)} ) - [r]
\bullet ([]!{+(0,-.3)} )}, 
\quad 
\xygraph{
\bullet ([]!{+(0,-.3)} ) - [r]
\bullet ([]!{+(0,-.3)} ) - [r]
\circ ([]!{+(0,-.3)} ) - [r]
\bullet ([]!{+(0,-.3)} ) - [r]
\bullet ([]!{+(0,-.3)} )},  
\\
\xygraph{
\bullet ([]!{+(0,-.3)} ) - [r]
\bullet ([]!{+(0,-.3)} ) - [r]
\bullet ([]!{+(0,-.3)} ) - [r]
\circ ([]!{+(0,-.3)} ) - [r]
\bullet ([]!{+(0,-.3)} )}, 
\quad 
\xygraph{
\bullet ([]!{+(0,-.3)} ) - [r]
\circ ([]!{+(0,-.3)} ) - [r]
\bullet ([]!{+(0,-.3)} ) - [r]
\bullet ([]!{+(0,-.3)} ) - [r]
\bullet ([]!{+(0,-.3)} )}. 
\end{eqnarray*}
The first diagram is corresponding to the subset $\Lambda' = \{\alpha_1, \alpha_3, \alpha_5\}$. 
For other parabolic subalgebra $\q_{\Lambda'}$ such as $\Lambda' = \{\alpha_1\}$, the induced affine connection is projectively equivalent to a flat affine connection.   

On the other hand concerning $\sll(n, \bH)$ we have the following: 
\begin{thm} 
The induced affine connection on any parabolic subalgebra of $\sll(n, \bH)$ is not projectively equivalent to any  
flat affine connection. 
\end{thm}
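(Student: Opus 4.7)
Every parabolic subalgebra $\q_{\Lambda'} \subset \sll(n,\bH)$ contains the centralizer $\g_0 = \da + \m_\emptyset$, and in particular contains the $3n$-dimensional subspace $\m_\emptyset = \mathrm{span}_{\R}\{\mathbf{i}E_{rr}, \mathbf{j}E_{rr}, \mathbf{k}E_{rr} : 1 \leq r \leq n\}$ of purely imaginary quaternionic diagonal matrices. This is precisely where the quaternion case splits from the real case, in which $\m_\emptyset = 0$. My plan is to derive the obstruction entirely from pairs of vectors in $\m_\emptyset$, so that the argument is uniform in $\Lambda'$.

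Following the setup used in Proposition \ref{main prop1}, the induced connection on $\q_{\Lambda'}$ is projectively equivalent to a flat one iff there exists a row vector $\Xi$ on $\q_{\Lambda'}$ such that the $\g_1$ component of $Q^{-1} f Q$ vanishes, where $Q = \begin{pmatrix} I & 0 \\ -\Xi & 1 \end{pmatrix}$ and $f$ is the Ricci-symmetric (P)-homomorphism (\ref{form of rep}). A direct block multiplication reduces this condition to the bilinear identity $\gamma(X,Y) = \Xi(\nabla_X Y) - \Xi(X)\Xi(Y)$ for all $X, Y \in \q_{\Lambda'}$. Crucially, for $X, Y \in \m_\emptyset$ the product $XY$ is diagonal, so $\nabla_X Y \in \g_0$; hence this identity only ever constrains $\Xi|_{\g_0}$, independently of which positive root spaces lie in $\q_{\Lambda'}$.

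Setting $a_r = \Xi(\mathbf{i}E_{rr})$, $b_r = \Xi(\mathbf{j}E_{rr})$, $c_r = \Xi(\mathbf{k}E_{rr})$, the quaternion relations $\mathbf{i}\mathbf{j} = \mathbf{k} = -\mathbf{j}\mathbf{i}$ applied at $(X,Y) = (\mathbf{i}E_{rr}, \mathbf{j}E_{rr})$ and $(\mathbf{j}E_{rr}, \mathbf{i}E_{rr})$ give $c_r = a_r b_r$ and $-c_r = a_r b_r$, so $c_r = 0$; cycling through $(\mathbf{i},\mathbf{j},\mathbf{k})$ in the same way forces $a_r = b_r = 0$ as well. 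Hence $\Xi|_{\m_\emptyset} = 0$, and the diagonal equation at $X = Y = \mathbf{i}E_{rr}$, using $\nabla_{\mathbf{i}E_{rr}} \mathbf{i}E_{rr} = -E_{rr} + \tfrac{1}{n}I_n = H^{r-1} - H^r$ and $\gamma(\mathbf{i}E_{rr}, \mathbf{i}E_{rr}) = 1/n$, collapses to $\Xi(H^{r-1}) - \Xi(H^r) = 1/n$ for every $r = 1,\ldots,n$. With the convention $H^0 = H^n = 0$, summing this family telescopes to $0 = 1$, the required contradiction.

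The main subtlety I anticipate is the careful passage from the conjugation characterization of projective equivalence to the bilinear criterion $\gamma(X,Y) = \Xi(\nabla_X Y) - \Xi(X)\Xi(Y)$, and verifying that the right-hand side lands in $\g_0$ whenever both arguments lie in $\m_\emptyset$, so that only $\Xi|_{\g_0}$ ever enters the obstruction. Once that reduction is recorded, the remaining computation is short and relies only on the quaternion multiplication table together with the formulas $\nabla_X Y = XY - \tfrac{\Re\tr(XY)}{n}I_n$ and $\gamma(X,Y) = -\tfrac{\Re\tr(XY)}{n}$ on $\sll(n,\bH)$ already set out in the preliminaries.
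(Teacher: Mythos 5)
Your proof is correct, and it reaches the paper's obstruction by a noticeably cleaner route. Both arguments rest on the same criterion --- the induced connection is projectively equivalent to a flat one iff some covector $\Xi$ makes the $\g_1$ block of $Q^{-1}fQ$ vanish, which unwinds to the bilinear identity $\gamma(X,Y)=\Xi(\nabla_XY)-\Xi(X)\Xi(Y)$ --- and both derive the contradiction entirely from the centralizer $\g_0=\da+\m_\emptyset$, which lies in every parabolic, so that the argument is uniform in $\Lambda'$. The difference is in how the coefficients $\Xi(\mathbf{i}E_{rr})$ are eliminated: the paper writes out $f(\mathbf{i}E_{tt})$ as an explicit block matrix, extracts two families of scalar equations (its (1) and (2)), and runs an induction on $t$ that terminates in $\beta_{nn}^2=-1$; you instead use the anticommutativity of $\mathbf{i},\mathbf{j},\mathbf{k}$ inside $\m_\emptyset$ to force $\Xi|_{\m_\emptyset}=0$ in one stroke, after which the diagonal equations $\Xi(H^{r-1})-\Xi(H^r)=1/n$ telescope to $0=1$. (The paper's induction is in effect the same telescoping done one step at a time, with $\xi_t=-t/n$ as the partial sums, and it never invokes the off-diagonal quaternion products.) Your version buys two things: it makes visible that the obstruction comes precisely from the quaternionic part $\m_\emptyset$ of the centralizer, the piece absent in the real case, and it avoids the large explicit matrices. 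The step you flag as delicate --- the passage to the bilinear criterion --- is exactly the computation the paper performs implicitly when reading off the $\g_1$ row of $P^{-1}f(\mathbf{i}E_{tt})P$, and it is legitimate here because the induced connection on $\q_{\Lambda'}$ is Ricci symmetric with $\gamma^{\q_{\Lambda'}}=\gamma|_{\q_{\Lambda'}}$ (the Remark after Proposition \ref{autoparallel}), so nothing is missing.
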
 
\begin{proof}
A set of simple roots of $\sll(n, \bH)$ is given by $\{\alpha_1, \ldots, \alpha_{n-1}\}$. 
Let $\Lambda' = \{\alpha_{i_1}, \cdots, \alpha_{i_j}\}$  be a proper subset of $\Lambda$.  
Then the parabolic subalgebra $\q_{\Lambda'}$ has the basis 
\begin{eqnarray*}
&& \{H^1, \ldots, H^{n-1} \mid i E_{tt}, j E_{tt}, k E_{tt} (1 \leq  t \leq n) \mid \cdots \}. 
\end{eqnarray*} 
The induced affine connection $\nabla$ on $\q_{\Lambda'}$ induces a Lie algebra representation 
$f: \q_{\Lambda'} \to \gl(m+1, \R)$.  
By the straight forward calculation we obtain the following: 
\begin{eqnarray*} 
&&  
\nabla_{iE_{t t}}H^k = 
\left\{
\begin{array}{l}
\frac{i}{n}(n-k)E_{t t}  \ \mbox{for} \ t \leq k  \\  
\frac{i}{n}(-k)E_{t t}  \  \mbox{for} \ t \geq k +1,   
\end{array}\right. \\
&& \nabla_{iE_{t t}}iE_{s s} = \delta_{ts} (H^{t-1} - H^t),   \\
&& \nabla_{iE_{t t}}jE_{s s} = \delta_{ts} k E_{t t},   \\
&& \nabla_{iE_{t t}}kE_{s s} = \delta_{ts} (-j) E_{t t},   \\
&& \gamma(i E_{t t}, i E_{t t}) = \frac{1}{n}.    
\end{eqnarray*} 
These data yields the following: 
\[
f(i E_{t t}) 
= \frac{1}{n} 
\left( {\arraycolsep=0.5mm
\begin{array}{cccccccc|cccccc|c|c} 
&&&&&&&    &&  & 0 &&   &&      &  \\
&&&&&&&    &&  & \vdots &&  && &  \\
&&&&&&&    &&  & 0 &&   &&      &  \\
&&&\mbox{\smash{\huge 0}}&&&&    &\mbox{\smash{\Huge 0}}&  & n H^{t-1} && \mbox{\smash{\huge 0}} &    &
\mbox{\smash{\huge 0}}       & 0  \\ 
&&&&&&&    &&  &  -n H^{t}  &&   &&       &  \\
&&&&&&&    &&  &  0  &&    &&    &   \\
&&&&&&&    &&  &  \vdots &&  &&  & \\
&&&&&&&    &&  & 0  &&    &&    &  \\ 
\hline 
&&&&&&&    &&  &  0  &&   &&   & 0 \\ 
&&& \mbox{\smash{\huge 0}} &&&&    &&  &   \vdots  &&    &&   &  \vdots \\ 
&&&&&&&    &&  &  0  &&    &&   & 0 \\ 
-1&-2&\cdots &-(t-1)&n-t&n-(t+1)&\cdots & 1   &  &  &  0  &&  &   &   \mbox{\smash{\huge 0}}   & n \\
&&&&&&&    &&  &  0 &  &   -n j E_{t t} &&    & 0 \\ 
&&& \mbox{\smash{\huge 0}} &&&&    &&  &  \vdots & n k E_{t t} &&    &   & \vdots \\ 
&&&&&&&    &&  &  0 &   &    &&   &  0  \\
\hline
&&&&&&&    &&  &  0  &&  & &      &  \\ 
&&&\mbox{\smash{\huge 0}} &&&&    && \mbox{\smash{\huge 0}} \ &  \vdots  &&\mbox{\smash{\huge 0}}&    &\mbox{\smash{\huge *}}    & 0  \\
&&&&&&&    &&  &  0 &&  & &  & \\
\hline
&&&0&&&&    && 0 &  -1 &  & 0 &   &0  & 0 
\end{array}}
\right).  
\]

Denote by $\Xi$ a row vector 
\begin{eqnarray*}
(\xi_1, \xi_2, \cdots, \xi_{n-1} \mid \beta_{i i}, \gamma_{i i}, \eta_{i i} \ (1 \leq i \leq n),   
\alpha_{i_r i_{r+1}}, \beta_{i_r i_{r+1}}, \gamma_{i_r i_{r+1}}, \eta_{i_r i_{r+1}}, \\
\alpha_{i_{r+1}, i_r}, \beta_{i_{r+1}, i_r}, \gamma_{i_{r+1}, i_r}, \eta_{i_{r+1}, i_r} (1 \leq  r \leq j) 
\mid \cdots)
\end{eqnarray*} 
of the length $m = \dim \q_{\Lambda'}$. 
Denote by $P$ a $(m+1) \times (m+1)$ matrix    
$
\begin{pmatrix} 
I_m & 0 \\
-\Xi & 1 
\end{pmatrix}$.   
The $\g_1$ component of $n P^{-1} f(i E_{t t}) P$ is equal to 
\begin{eqnarray*} 
(-\beta_{t t}, -2\beta_{t t}, \cdots, - (t-1)\beta_{t t}, (n-t)\beta_{t t}, [n-(t+1)]\beta_{t t}, \cdots, \beta_{t t} 
\mid 0, \cdots, 0, n\xi_{t-1} - n\xi_t -1, \\ 
n\eta_{t t}, -n\gamma_{t t}, 0 \cdots 0 \mid \cdots) 
- n\beta_{t t}(\xi_1, \xi_2, \cdots, \xi_{n-1} \mid \beta_{t t}, \gamma_{t t}, \eta_{t t} (1 \leq s \leq n) \mid \cdots).  
\end{eqnarray*} 
Thus if a representation $P^{-1} f P$ is corresponding to a flat affine connection, then we must have  

\noindent 
(1) $-n \beta_{t t} \xi_t + (n-t)\beta_{t t} = 0$,   (2) $-n \beta_{t t}^2 + n \xi_{t-1} -n \xi_t -1 = 0$ for $1 \leq  t \leq n$.  
Here $\xi_0$ and $\xi_n$ are equal to $0$.   
We now consider whether these equations have a common solution or not. 
Let $\Xi$ be a solution of the equations. 

Firstly suppose that $\beta_{11} \neq 0$. Then from (1) with $t = 1$ we obtain $\xi_1 = \frac{n-1}{n}$. 
Combining this condition with (2) yields $\beta_{11}^2 = -1$. This is a contradiction, hence we must have $\beta_{11} = 0$ and 
$\xi_1 = -\frac{1}{n}$. 

Now we show that if we have $\beta_{1 1} = \beta_{2 2} = \cdots = \beta_{t-1 t-1} = 0$ and 
$\xi_{1} = -\frac{1}{n}, \xi_2 = -\frac{2}{n}, \cdots, \xi_{t-1} = -\frac{t-1}{n}$, then we must have 
$\beta_{t t} = 0$ and $\xi_t = -\frac{t}{n}$.  
Suppose that $\beta_{t t} \neq 0$. Then from the equation 
(1) $-n \beta_{t t} \xi_t + (n-t)\beta_{t t} = 0$ we obtain $\xi_t = \frac{n-t}{n}$. 
Combining this with (2) yields $\beta_{t t}^2 = -1$, which is a contradiction. Hence we must have $\beta_{t t} = 0$. 
Then by using (2) again we obtain $\xi_t = -\frac{t}{n}$.  
It follows that by induction on $1 \leq t \leq n-1$ we obtain $(\xi_1, \xi_2, \cdots, \xi_{n-1}) = 
(-\frac{1}{n}, -\frac{2}{n}, \cdots, -\frac{n-1}{n})$.  
Then from the equation (2) with $t = n$, we obtain $\beta_{n n}^2 = -1$. 
Therefore there are no solutions in the real field, which gives our assertion.  
\end{proof}
\bigskip
\begin{center}
\bf Acknowledgments
\end{center}

The author wishes to thank Prof. Hiroshi Tamaru for giving me the article \cite{tamaru}.

\end{document}